\documentclass[12pt,a4paper]{article}%
\usepackage{amsmath}
\usepackage{amssymb}
\usepackage{makeidx}
\usepackage{amsfonts}
\usepackage{tikz}%
\setcounter{MaxMatrixCols}{30}
%TCIDATA{OutputFilter=latex2.dll}
%TCIDATA{Version=5.00.0.2606}
%TCIDATA{CSTFile=LaTeX article (bright).cst}
%TCIDATA{Created=Tuesday, January 25, 2011 17:21:44}
%TCIDATA{LastRevised=Monday, August 05, 2013 13:50:02}
%TCIDATA{<META NAME="GraphicsSave" CONTENT="32">}
%TCIDATA{<META NAME="SaveForMode" CONTENT="1">}
%TCIDATA{BibliographyScheme=BibTeX}
%TCIDATA{Language=American English}
\DeclareMathOperator{\diam}{diam}
\DeclareMathOperator{\R}{Range}
\newtheorem{theorem}{Theorem}[section]

\newtheorem{claim}[theorem]{Claim}

\newtheorem{definition}[theorem]{Definition}
\newtheorem{example}[theorem]{Example}
\newtheorem{lemma}[theorem]{Lemma}
\newtheorem{proposition}[theorem]{Proposition}
\newtheorem{remark}[theorem]{Remark}

\newenvironment{proof}[1][Proof]{\noindent\textbf{#1.} }{\rule{0.5em}{0.5em}}
\newenvironment{acknowledgement}[1][Acknowledgement]{\textbf{#1.} }{ }

\begin{document}

\title{On the spectrum of the hierarchical Laplacian}
\author{Alexander Bendikov\thanks{Research supported by the Polish Government Scientific Research Fund, Grant 2012/05/B/ST 1/00613}\\
\small{bendikov@math.uni.wroc.pl}
\and
Pawe{\l } Krupski\\
\small{Pawel.Krupski@math.uni.wroc.pl}
\and
\normalsize{Mathematical Institute, University of Wroc\l aw}\\
\normalsize{Pl.Grunwaldzki 2/4, 50-384~Wroc\l aw, Poland}}

\maketitle

\begin{abstract}
Let $(X,d)$ be a locally compact separable ultrametric space. We assume that
$(X,d)$ is proper, that is, any closed ball $B\subset X$ is a compact set.
Given a measure $m$ on $X$ and a function $C(B)$ defined on the set of balls
(the choice function) we define the hierarchical Laplacian $L_{C}$ which is
closely related to the concept of the hierarchical lattice of F.J. Dyson.
$L_{C}$ is a non-negative definite self-adjoint operator in $L^{2}(X,m).$ We
address in this paper to the following question:

\emph{How general can be the
spectrum }$\mathsf{Spec}(L_{C})\subseteq\mathbb{R}_{+}?$

When $(X,d)$ is compact, $\mathsf{Spec}(L_{C})$ is an increasing sequence of
eigenvalues of finite multiplicity which contains $0.$ Assuming that $(X,d)$
is not compact we show that under some natural conditions concerning the
structure of the hierarchical lattice ($\equiv$ the tree of $d$-balls) any
given closed subset $S\subseteq\mathbb{R}_{+}$, which contains $0$ as an
accumulation point and is unbounded if $X$ is non-discrete, may appear as $\mathsf{Spec}(L_{C})$ for some appropriately chosen
function $C(B).$ The operator $-L_{C}$ extends to $L^{q}(X,m),$ $1\leq
q<\infty,$ as Markov generator and its spectrum does not depend on $q.$ As an
example, we consider the operator $\mathfrak{D}^{\alpha}$ of fractional
derivative defined on the field $\mathbb{Q}_{p}$ of $p$-adic numbers.

\bigskip
\noindent \textbf{Keywords:} hierarchical lattice, Laplacian, spectrum, Markov semigroup, ultrametric, p-adic numbers, p-adic Quantum theory.

\smallskip
\noindent \textbf{MSC:} Primary: 47S10, 60J25, 81Q10; Secondary:  54E45, 05C05.
\end{abstract}

\section{Introduction}

\textbf{\ }\setcounter{equation}{0}The concept of hierarchical lattice and
hierarchical distance was proposed by F.J. Dyson in his famous papers on the
phase transition for $\mathbf{1D}$ ferromagnetic model with long range
interaction~\cite{Dyson1,Dyson2}.

The notion of hierarchical Laplacian $L$, which is closely related to the
Dyson's model was studied in several mathematical papers~\cite{Bovier},
\cite{Kritch1,Kritch2,Kritch3} and~\cite{Molchanov}.

These papers contain some basic information about $L$ (the spectrum, Markov
semigroup, resolvent etc) in the case when the hierarchical lattice satisfies
some symmetry conditions (homogeneity, self-similarity etc). Under these
symmetry conditions, $\mathsf{Spec}(L)$ is pure point and all eigenvalues have infinite
multiplicity. The main goal of the papers mentioned above was to introduce a
class of random perturbations of $L$ and then to justify the existence of the
spectral bifurcation from the pure point spectrum to the continuous one.

A systematic study of a class of isotropic Markov semigroups defined on an
ultrametric space $(X,d)$ has been done in~\cite{BGP} (see also the
forthcoming paper~\cite{BGPW}). In particular, given an isotropic Markov
semigroup $(P^{t})$ with Markov generator $-L,$ one can show that the
operator $L$ is a hierarchical Laplacian on $(X,d)$ associated with an
appropriate choice function $C(B)$ and vice versa. Then the general theory
developed in \cite{BGP} and \cite{BGPW} applies: modifying canonically the
underlying ultrametric $d$, we call this new ultrametric $d_{\ast},$ the set
$\mathsf{Spec}(L)$ is pure point and can be described as
\begin{equation}
\mathsf{Spec}(L)=\overline{\left\{  \frac{1}{d_{\ast}(x,y)}:x\neq y\right\}  }%
\cup\{0\}. \label{Spectrum}%
\end{equation}
In our construction the families of $d$-balls and $d_{\ast}$-balls coincide,
whence these two ultrametrics generate the same topology and the same
hierarchical structure, and in particular, the same class of hierarchical Laplacians.

The equation (\ref{Spectrum}) leads us to the following question.

\begin{description}
\item[(A)] \emph{How general can be the set }$\mathsf{Spec}(L)?$
\end{description}

or equivalently,

\begin{description}
\item[(B)] \emph{How general can be the set }$\R(d_{\ast})?$
\end{description}

In the course of study we assume that $(X,d)$ is a locally compact and
separable ultrametric space. Recall that a metric $d$ is called an
\emph{ultrametric} if it satisfies the ultrametric inequality%
\begin{equation}
d(x,y)\leq\max\{d(x,z),d(z,y)\},
\end{equation}
that is obviously stronger than the usual triangle inequality. Usually, we
also assume that the ultrametric $d$ is \emph{proper}, that is, each closed
$d$-ball is a compact set.

\

The paper is organized as follows. In Section 2 we recall some basic
properties of ultrametric spaces. The main original results there can be
summarized in the following statement (see Proposition~\ref{p1},
Theorem~\ref{t1}, Theorem~\ref{t2}).

\begin{theorem}
\label{ultrametric d-d'} Let $(X,d)$ be a locally compact, non-compact,
separable ultrametric space. Let $M\subset\lbrack0,\infty)$ be a countable
unbounded set which contains $0$. Assume that if $X$ contains a non-isolated
point, then $0$ is an accumulation point of $M$. Then the following properties hold:

\begin{enumerate}
\item There exists a proper ultrametric $d^{\prime}$ on $X$ which generates
the same topology as $d$ and such that $\R(d^{\prime})=M.$

\item Assume that $d$ is proper and that there exists a partition $\Pi$ of $X$
made of $d$-balls which contains infinitely many non-singletons. Then the
ultrametric $d^{\prime}$ as above can be chosen such that the collections of
$d$-balls and $d^{\prime}$-balls coincide.
\end{enumerate}
\end{theorem}

Let $\mathcal{B}$ be the set of all non-singleton balls. Let $\mathcal{D}$ be
the set of locally constant functions having compact support. In Section 3,
given a measure $m$ on $X$ which satisfies some natural conditions and a
choice function $C:\mathcal{B}\rightarrow(0,\infty)$, we define (pointwise)
the hierarchical Laplacian $(L_{C},\mathcal{D})$ associated with $m$ and $C,$%
\begin{equation}
L_{C}f(x):=-\sum\limits_{B\in\mathcal{B}:\text{ }x\in B}C(B)\left(
P_{B}f-f(x)\right),\quad f\in\mathcal{D},\label{hlaplacian}%
\end{equation}
where%
\[
P_{B}f:=\frac{1}{m(B)}\int\limits_{B}fdm.
\]
The operator $(L_{C},\mathcal{D})$ acts in $L^{2}=L^{2}(X,m),$ is symmetric
and admits a complete system of eigenfunctions
$\{f_{B,B^{\prime}}\}_{B^{\prime}\in\mathcal{B}},$%
\begin{equation}
f_{B,B^{\prime}}=\frac{1}{m(B)}\mathbf{1}_{B}-\frac{1}{m(B^{\prime}%
)}\mathbf{1}_{B^{\prime}}, \label{eigenfunction}%
\end{equation}
where $B\subset B^{\prime}$ are nearest neighboring
balls; when $m(X)<\infty$, we also set $f_{X,X^{\prime}}=1/m(X)$.  The
eigenvalue $\lambda(B^{\prime})$ corresponding to $f_{B,B^{\prime}}$ is%
\begin{equation}
\lambda(B^{\prime})=\sum\limits_{T\in\mathcal{B}:\text{ }B^{\prime}\subseteq
T}C(T); \label{eigenvalue}%
\end{equation}
when $m(X)<\infty$, we set $\lambda(X^{\prime})=0.$ In particular, we conclude
that $(L_{C},\mathcal{D})$ is an essentially self-adjoint operator in $L^{2}.$
By abuse of notation, we shell write $(L_{C},\mathsf{Dom}_{L_{C}})$ for its
unique self-adjoint extension.

Let $B\subset B^{\prime}$ be two nearest neighboring
balls. Choosing the function
\[
C(B)=\frac{1}{\diam(B)}-\frac{1}{\diam(B^{\prime})}%
\]
we obtain
\[
\lambda(B)=\frac{1}{\diam(B)},
\]
for any $B\in\mathcal{B}.$ Applying Theorem \ref{ultrametric d-d'}, we answer
the question $(A).$

\begin{theorem}\label{generality of spectrum} Let $(X,d)$ be a locally compact, non-compact,
separable ultrametric space. Let $S\subseteq\lbrack0,\infty)$ be a closed set
which contains $0$ as an accumulation point. Assume that if $X$ contains a non-isolated point then $S$
is unbounded. Then the following properties hold:

\begin{enumerate}
\item There exist a proper ultrametric $d^{\prime}$ on $X$ which generates the
same topology as $d$ and a choice function $C(B)$ defined for $(X,d^{\prime})$
such that $\mathsf{Spec}(L_{C})=S.$

\item Assume that $d$ is proper and that there exists a partition $\Pi$ of $X$
made of $d$-balls containing infinitely many non-singletons. Then there exists
a choice function $C(B)$ defined for $(X,d)$ such that $\mathsf{Spec}(L_{C})=S.$
\end{enumerate}
\end{theorem}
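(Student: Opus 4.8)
The plan is to reduce Theorem~\ref{generality of spectrum} to Theorem~\ref{ultrametric d-d'} via the eigenvalue formula~(\ref{eigenvalue}) and the spectral description~(\ref{Spectrum}). The key observation is the computation already carried out in the excerpt: if we choose
\[
C(B)=\frac{1}{\diam(B)}-\frac{1}{\diam(B^{\prime})}
\]
for nearest-neighboring balls $B\subset B^{\prime}$, then $\lambda(B)=1/\diam(B)$ for every $B\in\mathcal{B}$. Since the spectrum of $L_C$ is the closure of the set of eigenvalues together with $0$, and the eigenvalues are exactly the numbers $1/\diam(B)$ over non-singleton balls $B$, we have
\[
\mathsf{Spec}(L_C)=\overline{\left\{\frac{1}{\diam(B)}:B\in\mathcal{B}\right\}}\cup\{0\}.
\]
The diameters $\diam(B)$ of non-singleton balls are precisely the nonzero values in $\R(d')$, so controlling the spectrum amounts to controlling the range of the ultrametric. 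This is exactly what Theorem~\ref{ultrametric d-d'} lets us do.

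Given the target closed set $S\subseteq[0,\infty)$ containing $0$ as an accumulation point, I would first translate it into an appropriate countable set $M$ to feed into Theorem~\ref{ultrametric d-d'}. Set $T=\{1/s:s\in S,\ s>0\}$; this is the set of prospective diameters. I would choose a countable dense subset $D$ of $T$ and let $M=\{0\}\cup D$, arranging that $M$ is unbounded exactly when $T$ is unbounded, i.e.\ when $S$ accumulates at $0$ (which holds by hypothesis). The condition ``$S$ unbounded if $X$ has a non-isolated point'' translates into ``$0$ is an accumulation point of $M$ if $X$ has a non-isolated point,'' matching the hypothesis of Theorem~\ref{ultrametric d-d'} precisely, since $S$ unbounded means $T$ accumulates at $0$ means $M$ accumulates at $0$. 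Then Theorem~\ref{ultrametric d-d'}(1) produces a proper ultrametric $d'$ with $\R(d')=M$, and for part~(2) of the present theorem one invokes Theorem~\ref{ultrametric d-d'}(2) to keep the ball structure of $d$. Defining $C$ by the displayed formula on $(X,d')$ gives $\mathsf{Spec}(L_C)=\overline{\{1/\diam(B)\}}\cup\{0\}=\overline{D^{-1}}\cup\{0\}$, where $D^{-1}=\{1/t:t\in D\}\subseteq S$.

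The one delicate point, and where I expect the real work to lie, is matching closures exactly: I need $\overline{D^{-1}}\cup\{0\}=S$, not merely $\subseteq S$. Since $D$ is dense in $T=\{1/s:s\in S\setminus\{0\}\}$, the image $D^{-1}$ is dense in $S\setminus\{0\}$, and taking closure in $[0,\infty)$ recovers all of $S\setminus\{0\}$ together with any limit points; adjoining $0$ then yields $S$ provided $S$ is closed and $0\in S$, both of which hold by hypothesis. I would verify carefully that the homeomorphism $s\mapsto 1/s$ of $(0,\infty)$ sends the dense subset $D^{-1}\subseteq S\setminus\{0\}$ to a dense subset of $T$ and conversely, so that density is preserved and no spurious accumulation points appear at finite positive values; the only accumulation point that could be created or lost under inversion is at $0\leftrightarrow\infty$, which is precisely controlled by the unboundedness hypotheses. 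Once this bookkeeping is in place, the theorem follows by assembling the ingredients, and part~(2) is identical except that Theorem~\ref{ultrametric d-d'}(2) is used in place of part~(1) so that $C$ can be defined directly on the original $(X,d)$.
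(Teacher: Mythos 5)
Your proposal is correct and follows essentially the same route as the paper: the authors likewise take $M$ to be a countable dense subset of $S^{-1}\cup\{0\}$ containing $0$, invoke Theorem~\ref{ultrametric d-d'} (parts (1) and (2) respectively) to produce $d'$ with $\R(d')=M$, and use the choice function $C(B)=1/\diam'(B)-1/\diam'(B')$ so that $\mathsf{Spec}(L_C)=\overline{M^{-1}}=S$. Your ``delicate point'' about matching closures under inversion is exactly the (brief) chain of equalities the paper records, so nothing is missing.
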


Actually, we get Theorem~\ref{generality of spectrum} not only for the particular choice
function mentioned above but for more general types of them (see the proof in Section~3).

A very simple Example~\ref{ex1} shows that the condition \textquotedblleft
there exists a partition $\Pi$ of $X$ made of $d$-balls containing infinitely
many non-singletons" in statement (2) of Theorem~\ref{ultrametric d-d'} and
Theorem~\ref{generality of spectrum} can not be dropped: $X=\mathbb{N}$ and
$d(m,n)=\max(m,n)$ when $m\neq n$ and $0$ otherwise.

In the concluding Section 4 we consider the operator $\mathfrak{D}^{\alpha}$
of the $p$-adic fractional derivative of order $\alpha>0.$ This operator
related to the concept of $p$-adic Quantum Mechanics was introduced by V.S.
Vladimirov, see \cite{Vladimirov}, \cite{VladimirovVolovich} and
\cite{Vladimirov94}. We prove that $\mathfrak{D}^{\alpha}$ is a hierarchical
Laplacian. The main novelty here is that $\mathfrak{D}^{\alpha}$ admits a
closed extension in $L^{q},$ $1\leq q<\infty,$ call it $\mathfrak{D}%
_{q}^{\alpha}.$ The closed operator $-\mathfrak{D}_{q}^{\alpha}$ coincides
with the infinitesimal generator of a translation invariant Markov semigroup
acting in $L^{q}.$ The set $\mathsf{Spec}(\mathfrak{D}_{q}^{\alpha})$ consists
of eigenvalues $p^{k\alpha},k\in\mathbb{Z},$ each of which has infinite
multiplicity and contains $0$ as an accumulation point. In particular,
$\mathsf{Spec}(\mathfrak{D}_{q}^{\alpha})=\mathsf{Spec}(\mathfrak{D}%
_{2}^{\alpha}),$ for all $1\leq q<\infty.$ We study also random perturbations
$\mathfrak{D}^{\alpha}(\omega)$ of the operator $\mathfrak{D}^{\alpha}$ and
provide a limit behaviour of its normalized eigenvalues.

\

\begin{acknowledgement}
We are grateful to A. Grigor'yan, S. Molchanov, Ch. Pittet and W. Woess for
their various comments and encouragement. We thank the participants of
our seminar M. Krupski and W. Cygan for many fruitful discussions.
\end{acknowledgement}

\section{Metric matters}

\setcounter{equation}{0}

Recall that a topological space $X$ is \emph{totally disconnected} if for any
two distinct points $x,y\in X$ there exists a closed and open (=\emph{clopen})
subset $U$ of $X$ such that $x\in U$ and $y\notin U$; if $X$ has a basis
consisting of clopen subsets, then $X$ is called \emph{zero-dimensional}.

Clearly, zero-dimensional spaces are totally disconnected but there are Polish
spaces which are totally disconnected and not zero-dimensional (for example,
the complete Erd\H{o}s space $E=\{(x_{i})\in l_{2}: x_{i}\in\mathbb{R}%
\setminus\mathbb{Q}\}$, see~\cite[Section 1.4]{Eng} ). Nevertheless, for
locally compact Hausdorff spaces these two notions coincide, i.e. totally
disconnected locally compact Hausdorff spaces are zero-dimensional.

At the beginning let us recall two classical topological characterizations
which are crucial in the study of zero-dimensional separable metric spaces
(see, e.g., \cite[p. 35]{Kech}).

\begin{proposition}
\label{p2} A topological space $X$ is homeomorphic to the Cantor set
$\mathcal{C}=\{0,1\}^{\aleph_{0}}$ if and only if $X$ is metrizable, compact,
perfect and totally disconnected.
\end{proposition}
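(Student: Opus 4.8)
The forward implication is routine, so I would dispose of it first: the Cantor set $\mathcal{C}=\{0,1\}^{\aleph_0}$ carries the product metric and so is metrizable, it is compact by Tychonoff's theorem, it is perfect because every basic cylinder contains at least two points (hence no point is isolated), and it is totally disconnected since the cylinders form a basis of clopen sets separating points. The content of the proposition is the converse, which I would establish by building a Cantor scheme and reading off a homeomorphism.

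So assume $X$ is metrizable, compact, perfect and totally disconnected, and fix a compatible metric $d$. Being compact Hausdorff and totally disconnected, $X$ is zero-dimensional, so clopen sets form a basis. The goal is a family $(X_s)$ of nonempty clopen subsets indexed by finite $0$--$1$ strings $s$, with $X_\emptyset=X$, each $X_s=X_{s0}\sqcup X_{s1}$ a disjoint union of two nonempty clopen sets, and $\max_{|s|=n}\diam(X_s)\to 0$ as $n\to\infty$. Two elementary facts, both forced by the hypotheses, drive the construction. First, every nonempty clopen $U\subseteq X$ is infinite, since $U$ inherits the absence of isolated points and a nonempty Hausdorff space with no isolated point cannot be finite; hence by zero-dimensionality $U$ splits into two nonempty clopen pieces, and by iterating it splits into any prescribed number of them. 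Second, for each $\varepsilon>0$ any clopen $U$ admits a finite partition into nonempty clopen sets of diameter $<\varepsilon$: cover $U$ by clopen sets of diameter $<\varepsilon$, extract a finite subcover by compactness, and disjointify.

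I would build the scheme in stages, keeping the invariant that after stage $n$ all strings used as leaves have a common length $D_n$, the sets $\{X_s:|s|=D_n\}$ partition $X$, and $\diam(X_s)<1/n$ for each such $s$. The one genuinely technical point -- and the main obstacle -- is reconciling binary branching with the diameter requirement, since a single binary split need not shrink diameters. The remedy is to let each stage descend several tree levels at once. Passing from stage $n$ to stage $n+1$, fix a current leaf $X_s$ with $|s|=D_n$; by the second fact partition it into nonempty clopen pieces of diameter $<1/(n+1)$, and by the first fact refine so that the number of pieces is a power of two, say $2^{\ell_s}$. Realize these pieces as the leaves of a balanced binary subtree of depth $\ell_s$ hanging below $s$, each internal node being the (nonempty, clopen) union of its descendant leaves; then split the shallower leaves further, again by the first fact, so that every new leaf reaches the common depth $D_{n+1}:=D_n+\max_s\ell_s$. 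Every node so created is nonempty clopen and splits into two nonempty clopen children, and every new leaf has diameter $<1/(n+1)$, restoring the invariant. Granting this bookkeeping, the diameters along every branch tend to $0$.

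The homeomorphism then comes for free. Define $\phi:\{0,1\}^{\aleph_0}\to X$ by $\phi(\alpha)=\bigcap_n X_{\alpha|n}$, where $\alpha|n$ is the length-$n$ restriction; the intersection is a single point because the sets $X_{\alpha|n}$ are nested nonempty compacts with diameters tending to $0$. The map $\phi$ is surjective since $\{X_s:|s|=n\}$ partitions $X$ at each level, injective since two distinct sequences first differ at some coordinate and are then separated by the disjoint sets $X_{s0},X_{s1}$, and continuous because $\phi^{-1}(X_s)$ is the cylinder determined by $s$, which is clopen in $\{0,1\}^{\aleph_0}$. Since a continuous bijection from a compact space onto a Hausdorff space is a homeomorphism, $\phi$ witnesses $X\cong\{0,1\}^{\aleph_0}$, completing the converse.
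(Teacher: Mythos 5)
Your proof is correct: this is Brouwer's classical characterization of the Cantor set, which the paper itself does not prove but cites from Kechris, and your Cantor-scheme construction --- including the key bookkeeping device of descending several binary levels per stage so that binary branching can be reconciled with the shrinking-diameter requirement --- is exactly the standard argument found in that reference. The only cosmetic point is that your continuity step (``$\phi^{-1}(X_s)$ is a cylinder'') implicitly uses that the sets $X_s$ form a neighborhood basis of $X$, which your invariant $\diam(X_s)<1/n$ at depth $D_n$ immediately supplies.
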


It is well known that each non-empty locally compact, non-compact, metrizable,
separable space $X$ can be compactified by adding an extra point $\omega$
whose neighborhoods are declared to be of the form $\{\omega\}\cup(X\setminus
K)$, where $K$ is a compact subset of $X$. One can easily check that $\omega$
has a countable basis of neighborhoods of this form. It follows that the
compact space $X\cup\{\omega\}$ has a countable base, so it is metrizable. If,
additionally, $X$ has no isolated points (i.e., $X$ is perfect) and is totally
disconnected, then $X\cup\{\omega\}$ is homeomorphic to $\mathcal{C}$ by
Proposition~\ref{p2}. Thus we get the following characterization.

\begin{proposition}
\label{p3} Each metrizable locally compact, non-compact, separable, perfect,
totally disconnected space is homeomorphic to $\mathcal{C}\setminus\{p\}$,
where $p$ is an arbitrary point of $\mathcal{C}$.
\end{proposition}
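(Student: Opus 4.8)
The plan is to realize $X$ as the complement of a single point in a space to which Proposition~\ref{p2} applies, namely the Alexandroff one-point compactification. Write $X^{\ast}=X\cup\{\omega\}$, topologized so that the neighborhoods of $\omega$ are the sets $\{\omega\}\cup(X\setminus K)$ with $K\subseteq X$ compact. First I would check that $X^{\ast}$ is compact, Hausdorff, and second countable. Compactness and the Hausdorff property are standard for the one-point compactification of a locally compact Hausdorff space. For second countability, I would fix a countable base $\mathcal{U}$ of $X$ together with a compact exhaustion $X=\bigcup_{n}K_{n}$, $K_{n}\subseteq\mathrm{int}\,K_{n+1}$ (which exists because $X$ is locally compact and separable metrizable, hence $\sigma$-compact), and observe that $\mathcal{U}$ augmented by the countably many sets $\{\omega\}\cup(X\setminus K_{n})$ is a base for $X^{\ast}$. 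A compact Hausdorff second countable space is metrizable by Urysohn's theorem, so $X^{\ast}$ is a metrizable compact space.

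Next I would verify that $X^{\ast}$ is perfect and totally disconnected, so that Proposition~\ref{p2} applies. For perfectness: every point of $X$ is non-isolated in $X^{\ast}$ because it is non-isolated in the open subspace $X$, which is perfect by hypothesis; and $\omega$ is non-isolated because $X$ is non-compact, so each basic neighborhood $\{\omega\}\cup(X\setminus K)$ meets $X$. For total disconnectedness I would use that a locally compact Hausdorff totally disconnected space is zero-dimensional, so $X$ has a base of compact open sets. Any two distinct points of $X$ are then separated by a clopen subset of $X$, which remains clopen in $X^{\ast}$; and a point $x\in X$ is separated from $\omega$ by a compact open neighborhood $U\ni x$ in $X$, since such $U$ is open in $X^{\ast}$ and, being compact in the Hausdorff space $X^{\ast}$, is also closed there, while $\omega\notin U$. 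Hence $X^{\ast}$ is totally disconnected.

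By Proposition~\ref{p2}, $X^{\ast}$ is homeomorphic to $\mathcal{C}$; fix such a homeomorphism $h\colon X^{\ast}\to\mathcal{C}$ and set $q=h(\omega)$. Restricting $h$ gives a homeomorphism $X=X^{\ast}\setminus\{\omega\}\cong\mathcal{C}\setminus\{q\}$. To upgrade this to an arbitrarily prescribed point $p$, I would invoke the homogeneity of the Cantor set: for any two points $q,p\in\mathcal{C}$ there is a self-homeomorphism of $\mathcal{C}$ carrying $q$ to $p$ (immediate from the product structure $\{0,1\}^{\aleph_{0}}$, swapping $0\leftrightarrow 1$ in exactly those coordinates where $q$ and $p$ differ), and such a map restricts to a homeomorphism $\mathcal{C}\setminus\{q\}\cong\mathcal{C}\setminus\{p\}$. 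Composing yields $X\cong\mathcal{C}\setminus\{p\}$ for the arbitrary $p$.

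The routine parts are the compactness and the Hausdorff property of $X^{\ast}$; the step requiring the most care is the total disconnectedness of $X^{\ast}$ at the added point $\omega$, which is exactly where the equivalence of total disconnectedness and zero-dimensionality for locally compact Hausdorff spaces is used to produce compact open neighborhoods. The homogeneity of $\mathcal{C}$ is what makes the target point $p$ arbitrary, and I expect it to be the only genuinely extra ingredient beyond the construction already sketched in the paragraph preceding the statement.
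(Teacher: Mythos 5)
Your proof is correct and takes essentially the same route as the paper, which establishes Proposition~\ref{p3} via exactly this one-point compactification argument (sketched in the paragraph preceding the statement) combined with Proposition~\ref{p2}, leaving the homogeneity of $\mathcal{C}$ implicit where you make it explicit. One small tightening: an arbitrary clopen subset of $X$ need not remain clopen in $X^{\ast}$ (a non-compact one accumulates at $\omega$), so the separating set in your two-points-of-$X$ case should be taken from the base of compact open sets you already invoked, just as in your $x$-versus-$\omega$ case.
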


Let us now list some basic properties of ultrametric spaces $(X,d)$ (see
\cite[p. 227]{B},~\cite{Lem}).

\begin{description}
\item[(a)] \emph{Each ball $B_{r}(a)=\{x\in X: d(x,a)<r\}$ is a closed set,
$\diam B_{r}(a)\le r$ and $d(x,y)=\inf\{\diam B: \text{$B$ is a ball
containing $\{x,y\}$}\}$}.

\item[(b)] \emph{If $x\in B_{r}(a)$, then $B_{r}(a)=B_{r}(x)$.}

\item[(c)] \emph{If two balls intersect, then one of them is contained in the
other.}

\item[(d)] \emph{No infinite ultrametric space is isometric to a subset of the
Euclidean space}~\cite[Proposition 3.1]{Lu}.
\end{description}

If an ultrametric space $(X,d)$ is separable, then the following facts also hold.

\begin{description}
\item[(e)] \emph{The set $\R(d)$ of all values of metric $d$ is at most
countable.}

\item[(f)] \emph{All distinct balls of a given radius $r>0$ form at most countable
partition of $X$.}
\end{description}

If, in addition, $X$ is compact, then

\begin{description}
\item[(g)] \emph{$\R(d)$ has at most one accumulation point which is equal to 0.}
\end{description}

It is easy to see that properties (b) and (c) are in fact characteristic for
an ultrametric:

\begin{itemize}
\item \emph{if $d$ is a metric on $X$ satisfying either one of them then $d$
is an ultrametric.}
\end{itemize}

It follows from the above properties that each ultrametric space has a basis
of clopen sets, i.e., it is zero-dimensional. Conversely,

\begin{itemize}
\item \emph{each zero-dimensional separable metrizable space $X$ is metrizable
by an ultrametric $d$.}
\end{itemize}

It can be defined in the following way. Let $\{B_{1}, B_{2},\dots\}$ be a
basis of clopen subsets of $(X,d)$ and let $f_{n}:X\to\{0,1\}$ be the
characteristic function of $B_{n}$, $n=1,2,\dots$. Then
\[
d(x,y)=\max\{\frac{|f_{n}(x)-f_{n}(y)|}{n}: n\in\mathbb{N}\}
\]
(see \cite{deG}).

\begin{definition}
A metric $d$ on a set $X$ is called proper if every closed $d$-ball
$\overline{B}_{r}(a):=\{x\in X: d(x,a)\leq r\}$ is compact. A metric space
$(X,d)$ is proper if $d$ is proper.
\end{definition}

Notice that any proper metric is complete. It is known that any metrizable,
locally compact, separable space admits a proper metric~\cite{V}.

The proper ultrametric on $\mathbb{N}$ given in the next example is, in a
sense, generic for metrizable, locally compact, non-compact, separable totally
disconnected spaces.

\begin{example}
\label{ex1} Let $\mathbb{N}$ be equipped with the ultrametric
\[
d_{\max}(m,n)=\left\{
\begin{array}
[c]{ll}%
\max\{m,n\}, & \hbox{if $m\neq n$;}\\
0, & \hbox{if $m=n$.}
\end{array}
\right.
\]
Then any $d_{\max}$-ball is either a singleton or is of the form
$\{1,2,\dots,n\}$.
\end{example}

\begin{proposition}
\label{p1} If $X$ is a metrizable, locally compact, separable, totally
disconnected space, then $X$ admits a proper ultrametric that generates the
topology of $X$.
\end{proposition}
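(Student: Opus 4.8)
The plan is to build the ultrametric by organizing $X$ into a countable family of compact open \emph{pieces} and then prescribing distances \emph{within} each piece (the fine scales, which must recover the topology) separately from distances \emph{between} pieces (the coarse scales, which must keep every ball compact even when $X$ is non-compact). Since totally disconnected locally compact Hausdorff spaces are zero-dimensional, every point has a basis of compact open neighborhoods, so $X$ has a basis of compact open sets; being separable metrizable it is Lindel\"of, hence $\sigma$-compact, and covering $X$ by countably many compact open sets $O_{1},O_{2},\dots$ and disjointifying via $V_{k}=O_{k}\setminus(O_{1}\cup\dots\cup O_{k-1})$ yields a countable partition $\{V_{i}\}$ of $X$ into nonempty compact open sets (if only finitely many are nonempty, then $X$ is compact and every metric is automatically proper, so this case is covered for free). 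The coarse structure across the $V_{i}$ will be modelled exactly on the ``generic'' ultrametric $d_{\max}$ of Example~\ref{ex1}, which is the feature that forces balls of large radius to be compact.

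First I would fix, for each compact, metrizable, zero-dimensional piece $V_{i}$, a sequence of finite clopen partitions $\mathcal{Q}^{i}_{0}=\{V_{i}\}\succ\mathcal{Q}^{i}_{1}\succ\mathcal{Q}^{i}_{2}\succ\cdots$ that is nested and whose mesh (in any compatible metric on $V_{i}$) tends to $0$; such partitions exist by the standard argument of covering a compact zero-dimensional space by small clopen sets and disjointifying, then intersecting successive partitions to make them nested. Each member of $\mathcal{Q}^{i}_{k}$ is clopen in $V_{i}$ and compact, hence (as $V_{i}$ is open in $X$) compact open in $X$. For $x\neq y$ lying in the same $V_{i}$ put $m(x,y)=\max\{k:x,y\text{ lie in one member of }\mathcal{Q}^{i}_{k}\}$, which is finite because the mesh shrinks to $0$. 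I would then define
\[
d(x,y)=
\begin{cases}
0 & x=y,\\
2^{-m(x,y)-1} & x\neq y,\ \{x,y\}\subseteq V_{i}\text{ for some }i,\\
\max\{i,j\} & x\in V_{i},\ y\in V_{j},\ i\neq j,
\end{cases}
\]
so that within-piece distances lie in $(0,\tfrac12]$ while between-piece distances are integers $\geq 2$, cleanly separating the two scales.

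The verification splits along these scales. The ultrametric inequality $d(x,z)\leq\max\{d(x,y),d(y,z)\}$ reduces, in the case that all three points share a single $V_{i}$, to the inequality for the nested-partition ultrametric on $V_{i}$, and otherwise to the inequality for $d_{\max}$ on the indices (using that any between-piece distance is $\geq 2$ and dominates every within-piece distance); a short case analysis on how $x,y,z$ distribute among the pieces closes this. For the topology, each $V_{i}$ is the $d$-ball $B_{1}(x)$ for $x\in V_{i}$, hence $d$-clopen, and on $V_{i}$ the $d$-balls of radius $\le\tfrac12$ are precisely the members of the $\mathcal{Q}^{i}_{k}$, which form a neighborhood basis because the mesh tends to $0$; since the $V_{i}$ form an open partition in both topologies and the topologies agree on each $V_{i}$, they agree globally.

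The step I expect to carry the real content is \emph{properness}, and it is exactly where the $d_{\max}$ organization pays off. A closed ball of radius $r<1$ around $x\in V_{i}$ is either a member of some $\mathcal{Q}^{i}_{k}$ or all of $V_{i}$, hence compact; a closed ball of radius $r\geq 1$ around $x\in V_{i}$ equals either $V_{i}$ (if $i>r$) or the finite union $V_{1}\cup\dots\cup V_{\lfloor r\rfloor}$, which is a finite union of compacta and therefore compact. Thus every closed $d$-ball is compact and $d$ is proper. The delicate point to get right is precisely this last computation of large balls: one must check that the $\max$-rule between pieces never produces an infinite union, which is guaranteed by indexing the pieces by $\mathbb{N}$ and reading off $V_{1}\cup\dots\cup V_{n}$ as the ball of radius $n$, so that non-compactness of $X$ is absorbed entirely into having infinitely many \emph{finite}-union balls rather than any single infinite one.
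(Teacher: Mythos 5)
Your proposal is correct and takes essentially the same route as the paper: decompose the non-compact case into a countable partition of compact-open pieces, impose the $d_{\max}$-ultrametric of Example~\ref{ex1} on the indices between pieces so that every ball of large radius is a finite union $V_{1}\cup\dots\cup V_{\lfloor r\rfloor}$ of compacta, and keep the fine scales within each piece (the paper handles the compact case identically, by noting any compatible ultrametric is then automatically proper). The only difference is in the fine scales --- the paper simply restricts an arbitrary compatible ultrametric $d$ to each piece $P_{k}$, whereas you rebuild one from nested finite clopen partitions normalized below $\tfrac12$ --- and your extra normalization is in fact a point of care the paper's formula glosses over: as written, $d_{\Pi}$ can violate the ultrametric inequality when some piece has $d$-diameter exceeding the inter-piece distances (take $x,y\in P_{1}$ with $d(x,y)>2$ and $z\in P_{2}$), so the paper's $d$ should be read as truncated, e.g.\ replaced by $\min(d,1)$, exactly the scale separation your construction builds in automatically.
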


\begin{proof}
There is nothing to prove if $X$ is compact, since any ultrametric metrizing
$X$ is automatically proper. Assume that $X$ is not compact. Then there is a
partition $\Pi=\{P_{1},P_{2},\dots\}$ of $X$ made of non-empty, compact-open
subsets of $X$. Let $d$ be an ultrametric on $X$ that generates the topology
of $X$. We get our proper ultrametric by the following formula:%

\[
d_{\Pi}(x,y) = \left\{
\begin{array}
[c]{ll}%
d(x,y), & \hbox{if $x,y\in P_k$ for some  $k$;}\\
\max(m,n), & \hbox{if $x\in P_m, y\in P_n$ and $m\neq n$.}
\end{array}
\right.
\]
Indeed, observe that $d_{\Pi}(P_{1},P_{n}):=\inf\{d(x,y): x\in P_{1}, y\in
P_{n}\}\rightarrow\infty$ if $n\rightarrow\infty$. Hence, each closed $d_{\Pi%
}$-bounded subset of $X$ is compact.
\end{proof}

\begin{example}
\label{ex p-adic} One of the most known example of a proper ultrametric space
is the field $\mathbb{Q}_{p}$ of $p$-adic numbers endowed with the $p$-adic
norm $\left\Vert x\right\Vert _{p}$ and the $p$-adic ultrametric $d(x,y)=$
$\left\Vert x-y\right\Vert _{p}$.

This ultrametric space is locally compact, non-compact, separable, perfect and
totally disconnected, so it is homeomorphic to the Cantor set minus a point
(see Proposition~\ref{p3}).
\end{example}

\begin{example}
\label{ex cyclic} The other example which we have in our mind is a discrete
Abelian group
\[
G=\mathbb{Z}(p_{1})\oplus\mathbb{Z}(p_{2})\oplus...\mathbb{\ },
\]
the infinite sum of cyclic groups $\mathbb{Z}(p_{k})=\mathbb{Z}/p_{k}%
\mathbb{Z}$, where $(p_{k})$ is a given sequence of integers. The ultrametric
$d$ in $G$ is defined as follows: $d\left(  x,y\right)  $ is the minimal value
of $n$ such that $x$ and $y$ belong to the same coset of the subgroup
$G_{n}=\mathbb{Z}(p_{1})\oplus\mathbb{Z}(p_{2})\oplus...\oplus\mathbb{\ Z}%
(p_{n}).$
\end{example}

\begin{lemma}
\label{partition} Let $X$ be a metrizable, locally compact, non-compact,
separable, totally disconnected space. There exists an infinite countable
partition $\Pi$ of $X$ such that each element of $\Pi$ is a nondegenerate
(i.e., non-singleton) compact-open subset of $X$ which contains an
accumulation point or, else, it is a two-point set.
\end{lemma}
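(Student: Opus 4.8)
The plan is to begin with a countable partition of $X$ into compact-open sets, sort its members according to cardinality, and then repackage them so that every resulting block either contains an accumulation point of $X$ or is a two-point set, all the while keeping each block compact.

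First I would produce an infinite countable partition $\{P_1,P_2,\dots\}$ of $X$ into nonempty compact-open sets, exactly as in the proof of Proposition~\ref{p1}: since $X$ is locally compact, Hausdorff and totally disconnected it is zero-dimensional, so every point has a compact-open neighbourhood; as $X$ is separable metric it is Lindel\"of, hence these neighbourhoods admit a countable subcover $\{U_n\}$, and disjointifying via $P_n=U_n\setminus(U_1\cup\dots\cup U_{n-1})$ yields compact-open blocks. Discarding the empty ones leaves a countable family, which must be infinite because $X$ is non-compact.

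The key observation is a dichotomy for the blocks. Each $P_n$ is compact, hence finite or infinite. If $P_n$ is finite, then for $x\in P_n$ the set $P_n\setminus\{x\}$ is finite, so closed (as $X$ is $T_1$), whence $\{x\}=P_n\cap(X\setminus(P_n\setminus\{x\}))$ is open; thus every point of a finite block is an isolated point of $X$. If $P_n$ is infinite, then by Bolzano--Weierstrass the infinite set $P_n$ has an accumulation point lying in the closed set $P_n$; this point is non-isolated in $X$, so such a block automatically contains an accumulation point of $X$ and is nondegenerate. Hence the only troublesome blocks are the finite ones, which consist solely of isolated points. I would then split into two cases. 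If infinitely many blocks are infinite, I enumerate them as $J_1,J_2,\dots$ and the finite blocks as $F_1,F_2,\dots$ (a finite or infinite list) and set $Q_k:=J_k\cup F_k$, with $F_k=\emptyset$ once the list is exhausted; each $Q_k$ is a finite union of compact-open sets, is infinite, and so contains an accumulation point, and $\{Q_k\}$ is again a countably infinite partition of $X$. If only finitely many blocks are infinite, say $J_1,\dots,J_r$ with possibly $r=0$, then infinitely many blocks are finite, so $D:=X\setminus(J_1\cup\dots\cup J_r)$ is a countably infinite clopen set all of whose points are isolated; I would split $D$ into two-point sets $\{x_1,x_2\},\{x_3,x_4\},\dots$, each compact and open in the discrete set $D$, and take $\Pi$ to consist of these pairs together with $J_1,\dots,J_r$.

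The main obstacle I anticipate is not deep but is a genuine compactness bookkeeping constraint: one cannot simply absorb all finite blocks into a single infinite block, since infinitely many isolated points may accumulate nowhere and their union need not be compact. The pairing $Q_k=J_k\cup F_k$, matching one finite block to one infinite block, is exactly what preserves compactness of each merged block, and the second case is precisely the situation where infinite blocks are too scarce to absorb the isolated points, forcing the two-point alternative (which also covers the purely discrete case $r=0$, as in Example~\ref{ex1}). Checking that the resulting family is a genuine partition, is infinite, and satisfies the stated cardinality and accumulation requirements is then routine.
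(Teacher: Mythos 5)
Your proposal is correct and is essentially the paper's own argument: start from an arbitrary infinite countable partition into compact-open sets, observe that a block contains an accumulation point if and only if it is infinite (finite blocks consisting of isolated points), and then either absorb the finite blocks into infinite ones or split the discrete leftover into two-point sets. The only difference is bookkeeping: you case-split on whether there are infinitely many \emph{infinite} blocks (pairing finite blocks one-to-one with infinite ones), whereas the paper splits on whether there are finitely many \emph{finite} blocks (absorbing them all into a single infinite block $R_0$); both variants work, and your explicit construction of the initial partition and the compactness remark about why one cannot dump all finite blocks into one infinite block are sound additions rather than deviations.
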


\begin{proof}
Let us take any infinite, countable partition $\mathcal{R}$ of $X$ consisting
of compact-open subsets of $X$ and let
\[
\mathcal{R}^{\prime}=\{R\in\mathcal{R}: \text{$R$ contains no accumulation
points}\}.
\]
Clearly, every $R\in\mathcal{R}^{\prime}$ is a finite set. If $\mathcal{R}%
^{\prime}$ is finite, then choose $R_{0}\in\mathcal{R}\setminus\mathcal{R}%
^{\prime}$ and define
\[
\Pi=\{R_{0}\cup\bigcup\mathcal{R}^{\prime}\}\cup(\mathcal{R}\setminus
(\mathcal{R}^{\prime}\cup\{R_{0}\})).
\]

If $\mathcal{R}^{\prime}$ is infinite, say $\mathcal{R}^{\prime}=\{R_{1}%
,R_{2},\dots\}$, then there are mutually disjoint two-point sets $P_{1}%
,P_{2},\dots$ such that $\bigcup_{n=1}^{\infty}R_{n}=\bigcup_{n=1}^{\infty
}P_{n}$ and we can put
\[
\Pi=(\mathcal{R}\setminus\mathcal{R}^{\prime})\cup\{P_{1},P_{2},\dots\}.
\]
The proof is finished.
\end{proof}

\

Notice that if $(X,d)$ is a proper ultrametric space then, for any increasing
function $\phi:\R(d)\rightarrow\lbrack0,\infty)$, metric $d^{\prime}=\phi\circ
d$ is again a proper ultrametric having the same collection of balls as $d$.
Hence, by Proposition~\ref{p1}, each nondegenerate, metrizable, locally
compact, separable, totally disconnected space admits infinitely many proper
equivalent ultrametrics.

\

Let $(X,d)$ be a compact separable ultrametric space. It follows from property
(g) that if $X$ is finite, then $\R(d)$ is finite and if $X$ is infinite, we
can arrange the values of $d$ in a sequence decreasing to 0, i.e.,
$\R(d)=\{c_{1},c_{2},\dots\}$, where $c_{n}\searrow0$. Since, for any other
sequence $c_{n}^{\prime}\searrow0$, there is an increasing surjection
$\phi:\{c_{1},c_{2},\dots\}\rightarrow\{c_{1}^{\prime},c_{2}^{\prime},\dots
\}$, we get equivalent proper ultrametric $d^{\prime}=\phi\circ d$ on $X$ with
the same family of balls as $d$ and with $\R(d^{\prime})=\{c_{1}^{\prime
},c_{c}^{\prime},\dots\}$.

An analogous statement for locally compact, non-compact $(X,d)$ has a more
complicated nature.

\begin{theorem}
\label{t1} Let $X$ be a metrizable, locally compact, non-compact, separable,
totally disconnected space and $\Pi$ be an infinite countable partition of $X$
as in Lemma~\ref{partition}. Let $M\subset[0,\infty)$ be a countable
unbounded set containing $0$. Assume that if $X$ contains an accumulation
point, then $0$ is an accumulation point of $M$. Then, for each proper
ultrametric $d_{p}$ which generates the topology of $X$, there exists an
equivalent proper ultrametric $d$ on $X$ such that $\R(d)=M$ and the
collections of $d_{p}$-balls and $d$-balls contained in any $P\in\Pi$
coincide. Moreover, $d(x,y)\le d_{p}(x,y)$ for $x,y\in B$ and each $d_{p}%
$-ball $B\varsubsetneq P\in\Pi$.
\end{theorem}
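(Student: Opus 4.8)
The plan is to build the target ultrametric $d$ one partition element at a time, refining the choice function of radii within each $P\in\Pi$ so that the overall range becomes exactly $M$, while preserving the ball structure of $d_p$ inside each $P$. First I would split the countable set $M=\{0\}\cup\{a_1,a_2,\dots\}$ into two parts: a ``global'' part that encodes the distances \emph{between} distinct elements of $\Pi$, and a family of ``local'' parts, one per $P\in\Pi$, that encodes the distances \emph{inside} each $P$. Concretely, since $M$ is unbounded, I would choose an increasing sequence $t_1<t_2<\cdots\to\infty$ from $M\setminus\{0\}$ to serve as the inter-block values, and declare $d(x,y)=t_{\max(m,n)}$ whenever $x\in P_m$, $y\in P_n$ with $m\neq n$ (after fixing an enumeration $\Pi=\{P_1,P_2,\dots\}$). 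This is the same device used in Proposition~\ref{p1} to force properness: the inter-block distances tend to $\infty$, so every $d$-bounded closed set meets only finitely many blocks and is therefore compact.

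The heart of the argument is the construction inside a single $P\in\Pi$. By Lemma~\ref{partition} each $P$ is either a two-point set or a nondegenerate compact-open set containing an accumulation point; in either case $(P,d_p\!\restriction_P)$ is a \emph{compact} ultrametric space. By property (g), its range $\R(d_p\!\restriction_P)$ is either finite (the two-point or finite case) or a sequence $c_1>c_2>\cdots\searrow 0$. I now want an increasing bijection (or surjection) $\phi_P$ from $\R(d_p\!\restriction_P)$ onto a prescribed countable subset $M_P\subseteq M$ with $M_P$ also accumulating only at $0$, and then set $d\!\restriction_P=\phi_P\circ d_p$. As noted in the remark preceding this theorem, composing a proper ultrametric with an increasing function yields an equivalent proper ultrametric with the \emph{same family of balls}; this is exactly what gives the ``collections of $d_p$-balls and $d$-balls contained in $P$ coincide'' conclusion. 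To also secure $d(x,y)\le d_p(x,y)$ for $x,y$ in a $d_p$-ball $B\subsetneq P$, I would choose each $\phi_P$ so that $\phi_P(c)\le c$ on the relevant values — e.g. by mapping the decreasing sequence $c_n$ into values of $M_P$ that lie below it, which is possible precisely because $0$ is an accumulation point of $M$ and hence $M$ has points arbitrarily close to $0$.

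The remaining bookkeeping is to distribute $M$: I must choose the $M_P$ and the $t_k$ so that $\bigcup_P M_P \cup \{t_k\} \cup\{0\}=M$ exactly, with nothing left over and no spurious values introduced. Since $\Pi$ is infinite and $M$ is countable, I would enumerate $M\setminus\{0\}$ and assign its elements greedily: route the large ``tail'' values to the inter-block sequence $(t_k)$, and route the small values into the local ranges $M_P$, ensuring each infinite $M_P$ accumulates at $0$. The step I expect to be the main obstacle is reconciling three constraints simultaneously inside each block — surjectivity onto the assigned $M_P$, the monotonicity needed to preserve balls, and the domination $\phi_P(c)\le c$ — especially when $\R(d_p\!\restriction_P)$ is finite (the two-point blocks) and can only absorb finitely many values of $M$, so the burden of exhausting all of $M$ falls on the infinitely many nondegenerate blocks. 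Handling the two-point blocks and the nondegenerate blocks in separate cases, and checking that the global assignment still covers every element of $M$, is the delicate part; once the per-block maps $\phi_P$ and the inter-block values are fixed, verifying that $d$ is a proper ultrametric with $\R(d)=M$ is routine from the ultrametric inequality applied separately to the within-block and across-block cases.
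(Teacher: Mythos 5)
There is a genuine gap in your within-block step. You set $d|_P=\phi_P\circ d_p$ for an increasing $\phi_P$ and invoke the remark that composing with an increasing function preserves balls. That remark applies to the balls of a metric space with respect to itself; applied to the subspace $(P,d_p|_P)$ it preserves the \emph{subspace} balls $\{y\in P: d_p(x,y)\le c\}$, which are in general not $d_p$-balls of $X$. The elements of a partition from Lemma~\ref{partition} are compact-open sets but need \emph{not} be $d_p$-balls: such a $P$ is a finite union of maximal $d_p$-balls, and if $P$ has three or more maximal balls at two or more distinct separation scales, then for an intermediate cross-scale $c$ the truncation $T=\{y\in P: d_p(x,y)\le c\}$ becomes a $d$-ball of your metric lying strictly inside $P$. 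But $T$ is not a $d_p$-ball: any $d_p$-ball containing two maximal balls of $P$ contains their wedge $\{x\}\wedge\{y\}$, which must stick out of $P$ (otherwise those balls would not be maximal in $P$), whereas $T\subseteq P$. So the conclusion ``the collections of $d_p$-balls and $d$-balls contained in any $P\in\Pi$ coincide'' fails for your construction. The paper avoids exactly this: inside $P$ it does not rescale $d_p$ at all, but builds a Whitney-type map $w$ on the tree of $d_p$-balls \emph{rooted at} $P$ and sets $d(x,y)=w(\{x\}\wedge\{y\})$, where the wedge is taken in that tree; all pairs lying in different maximal balls of $P$ then receive the single value $w(P)$, so the $d$-balls properly inside $P$ are precisely the tree vertices, i.e.\ the $d_p$-balls.

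The bookkeeping you flag as ``the main obstacle'' is also genuinely unresolved in your plan, and the paper dissolves it with one assignment you are missing: enumerate $M\setminus\{0\}=\{m_1,m_2,\dots\}$ and make \emph{every} positive element of $M$ the new diameter of a block, $w(P_n)=m_n$. This is legitimate because the domination clause only constrains balls $B\varsubsetneq P$, so block diameters are free; surjectivity onto $M$ is thus carried entirely by the blocks, and nothing has to be ``routed'' into the constrained interior positions or the inter-block values. The inter-block values $k_n$ on $\bigcup_{i\le n}P_i$ (your $t_{\max(m,n)}$ device, which otherwise matches the paper's chain over $\Pi$) are then chosen \emph{adaptively} from $M$ with $k_{n+1}>\max\{k_n,m_{n+1}\}$, using only unboundedness; interior values inside each accumulation-point block come from any null sequence of $M$ below $m_n$ with $s_P(B)\le\diam_{d_p}(B)$ (this is where $0$ being an accumulation point of $M$ enters), and repetitions of values across blocks are harmless since only the range matters. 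Note also that fixing $(t_k)$ in advance, as you do, imposes the extra constraint $\diam_d(P_m)\le t_m$ (needed for the ultrametric inequality on triples $x,z\in P_m$, $y\in P_n$), which your greedy distribution would additionally have to respect; the paper's adaptive choice of the $k_n$ after the $m_n$ sidesteps this entirely.
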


The idea of the proof of Theorem~\ref{t1} is based on a specific tree-structure
of the family of balls in an ultrametric space. So, let us first introduce
necessary notions.

Let $\mathcal{T}(X,d)$ be the collection of all balls in a proper ultra-metric
space $(X,d)$. Consider $\mathcal{T}(X,d)$ with a partial order
\[
B^{\prime}\preccurlyeq B\Leftrightarrow B\subseteqq B^{\prime}.
\]
Observe that, by properties (c) and (g), each ball $B\in\mathcal{T}(X,d)$ has
a unique immediate predecessor with respect to $\preccurlyeq$ and if $B$ is
not a singleton, then it has at most finitely many immediate successors. If
$A\wedge B=\inf\{A,B\}$, the infimum taken with respect to $\preccurlyeq$
(which is the smallest, with respect to the inclusion, ball containing balls
$A$ and $B$), then $(\mathcal{T}(X,d),\wedge)$ is a semi-lattice. We prefer to
view $\mathcal{T}(X,d)$ geometrically as a graph with vertices being elements
of $\mathcal{T}(X,d)$ and edges being pairs of $d$-balls $(B,B^{\prime})$ such
that $B$ is an immediate successor or predecessor of $B^{\prime}$.

A path in $\mathcal{T}(X,d)$ from $B_{1}$ to $B_{n}$ is a finite sequence
$B_{1}, B_{2},\dots, B_{n}$ of mutually distinct vertices such that, for each
$i=1,2,\dots,n-1$, either $(B_{i},B_{i+1})$ or $(B_{i+1},B_{i})$ is an edge.
Given two vertices $A,B$, there are unique paths from $A$ to $A\wedge B$ and
from $B$ to $A\wedge B$ and the concatenation of these two paths gives the
unique path from $A$ to $B$. Thus, $\mathcal{T}(X,d)$ is a countable, locally
finite, path-connected tree. Vertices with no successor are called end-points
of the tree; they represent singleton balls.

Let $2^{Y}$ be the family of compact nonempty subsets of a Hausdorff
topological space $Y$. We consider $2^{Y}$ with the Vietoris topology (which
is generated by the subbase of sets of the form $\{A\in2^{Y}: A\subset U\}$
and $\{A\in2^{Y}: A\cap U\neq\emptyset\}$ whenever $U$ is open in $Y$). Recall
that if $(Y,d)$ is a metric space, then $2^{Y}$ is metrizable by the Hausdorff
metric
\[
H_{d}(A,B):= \inf\{r>0: \text{$A\subset N_{d}(r,B)$ and $B\subset N_{d}(r,A)$%
}\},
\]
where $N_{d}(r,A)= \{x\in Y: d(x,A)<r\}$, $d(x,A)=\inf\{d(x,a): a\in A\}$
(see, e.g.,~\cite{IN}).

\begin{definition}\label{def Whitney}
Given $\mathcal{H}\subset2^{Y}$, a continuous map
$w:\mathcal{H}\to[0,\infty)$ is said to be a Whitney map for $\mathcal{H}$ if
\begin{enumerate}
\item $w(A)<w(B)$ whenever $A\varsubsetneq B$,
\item $w(A)=0$ if and only if $A$ is a singleton.
\end{enumerate}
\end{definition}

Whitney maps for $2^{Y}$ exist for metric separable spaces $Y$~\cite[p.
205]{IN}. It is easy to see that the diameter function $\diam$ is never a
Whitney map for $\mathcal{H}=\{F\subset Y: |F|\le3\}$ (thus for $2^{Y}$ as
well) if $Y$ contains at least three points. Nevertheless, if $(Y,d)$ is a
proper ultrametric space, then property (b) implies that $\diam:\mathcal{T}%
(Y,d)\to[0,\infty)$ is a Whitney map.

Denote $F_1=\{\{x\}: x\in X\}$. The following proposition will be used in Section~\ref{HierLapl}.

\begin{proposition}\label{WhitneyLemma}
Let $(X,d)$ be a proper ultrametric space with a Whitney map $w: \mathcal{T}(X,d)\cup F_1\to [0,\infty)$ satisfying the following condition:
\begin{equation}\label{eq_proper}
\lim_{n\to\infty} w(B_n) =\infty \quad\text{for each infinite sequence}\quad B_1\varsubsetneq B_2\varsubsetneq\dots.
\end{equation}
Then the formula
$ d_{\ast}(x,y)= w(\{x\}\wedge \{y\})$ for $x\neq y$ (where $\{x\}\wedge \{y\}$ denotes the smallest ball containing $x$ and $y$) and $d_{\ast}(x,y)= 0$ for $x=y$
  defines a proper ultrametric in $X$ which induces the same topology and the same collection of balls as $d$.
\end{proposition}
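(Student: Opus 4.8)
The plan is to verify the three claims in turn: that $d_\ast$ is an ultrametric, that it is proper, and that it induces the same topology and the same balls as $d$. First I would check that $d_\ast$ is a well-defined metric. Symmetry is immediate from $\{x\}\wedge\{y\}=\{y\}\wedge\{x\}$, and $d_\ast(x,y)=0$ with $x\neq y$ is impossible since $\{x\}\wedge\{y\}$ is then a non-singleton ball, so $w$ of it is strictly positive by Definition~\ref{def Whitney}(2). For the ultrametric inequality I would use the tree structure of $\mathcal{T}(X,d)$: given three distinct points $x,y,z$, the three smallest balls $\{x\}\wedge\{y\}$, $\{y\}\wedge\{z\}$, $\{x\}\wedge\{z\}$ are pairwise comparable by property (c), and in fact the largest two coincide (this is the standard ``isosceles'' phenomenon in trees). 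Hence among the three quantities $w(\{x\}\wedge\{y\})$, $w(\{y\}\wedge\{z\})$, $w(\{x\}\wedge\{z\})$, the two largest are equal by monotonicity of $w$, which gives $d_\ast(x,z)\le\max\{d_\ast(x,y),d_\ast(y,z)\}$.

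Next I would identify the $d_\ast$-balls. The key observation is that, since $w$ is strictly monotone along chains, for a fixed ball $B$ and $a\in B$ the set $\{y: \{a\}\wedge\{y\}\subseteqq B\}$ is exactly $B$ itself: $y\in B$ forces $\{a\}\wedge\{y\}\subseteqq B$, while $y\notin B$ forces $\{a\}\wedge\{y\}\varsupsetneq B$. Using this, each $d$-ball $B$ equals a $d_\ast$-ball and vice versa. Concretely, I would show that the collection of $d_\ast$-balls centered at $a$ is precisely the chain of $d$-balls containing $a$, with $d_\ast$-radius controlled by the $w$-values of the immediate successors along that chain; this is where strict monotonicity (Definition~\ref{def Whitney}(1)) is used to guarantee that distinct $d$-balls receive distinct ``levels'' and hence the two ball families coincide set-theoretically. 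Since the balls coincide and $d(x,y)=\inf\{\diam B: B\ni x,y\}$ by property (a), the two ultrametrics generate the same topology.

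Finally, for properness I would argue that a closed $d_\ast$-ball is compact. Here condition~\eqref{eq_proper} is exactly what rules out the pathology that an ascending chain of balls could have bounded $w$-value and thus produce a $d_\ast$-ball that is not contained in any compact $d$-ball. Concretely, fix $a$ and $r>0$; I would show the closed $d_\ast$-ball $\{y: d_\ast(a,y)\le r\}$ equals some single $d$-ball $B$, namely the union of the chain of $d$-balls $B'\ni a$ with $w(B')\le r$. By~\eqref{eq_proper} this chain is finite, so it has a largest element $B$, which is a $d$-ball and hence compact by properness of $d$; therefore the $d_\ast$-ball is compact.

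I expect the main obstacle to be the bookkeeping in the ball-identification step: one must carefully distinguish the roles of ``$w$ evaluated at a ball'' versus ``the $d_\ast$-distance between two points'', and verify that the $d_\ast$-radius of a $d$-ball $B$ is determined by the $w$-values of its immediate successors rather than of $B$ itself. The properness step is then essentially a corollary of~\eqref{eq_proper} together with the already-established coincidence of the two ball families.
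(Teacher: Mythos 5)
Your outline is correct in substance, but it takes a genuinely different route from the paper's. The paper argues topologically throughout: equivalence of the topologies comes from the continuity of $w$ at each singleton; properness comes from using~\eqref{eq_proper} to find a compact $d$-ball containing a given closed $d_{\ast}$-ball; and the coincidence of the two ball families is proved by two contradiction arguments (Claim~\ref{claim_balls} and the analysis of $r'=\inf Z$), each of which shows that a failure would force $\R(d_{\ast})$ to accumulate at a nonzero value on a compact set, contradicting property (g). You instead compute directly in the tree: $B^*_r(a)=\{a\}\cup\bigcup\{C\in\mathcal{T}(X,d): a\in C,\ w(C)<r\}$, the balls containing $a$ form a chain, \eqref{eq_proper} forbids an infinite strictly increasing subchain, so the union has a largest element and is itself a $d$-ball; conversely every $d$-ball $B$ is recovered as $B^*_r(a)$ for $r\in\bigl(w(B),w(B')\bigr]$, where $B'$ is the immediate predecessor of $B$. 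This replaces both of the paper's compactness contradictions by an explicit identification of the $d_{\ast}$-balls, which is arguably cleaner and yields the exact admissible radii; note, however, that the existence of the immediate predecessor $B'$ is itself derived from properties (c) and (g) when the paper introduces $\mathcal{T}(X,d)$, so compactness has not been eliminated, only repackaged. Your isosceles argument for the ultrametric inequality is fine (the paper dismisses this step as easy).

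Two concrete points need repair. First, you never invoke the continuity of $w$, which is part of Definition~\ref{def Whitney}, and strict monotonicity alone does not suffice: if $x$ is non-isolated and $\ell:=\inf\{w(C): C\in\mathcal{T}(X,d),\ x\in C\}>0$, then $B^*_{\ell}(x)=\{x\}$ would be a $d_{\ast}$-ball of positive radius that is not a $d$-ball, so your claim that the $d_{\ast}$-balls centered at $a$ are precisely the $d$-balls containing $a$ would fail, and the $d_{\ast}$-topology would be strictly finer than that of $d$ at $x$. You must use Vietoris-continuity of $w$ at $\{x\}$: the chain of balls containing a non-isolated $x$ decreases to $\{x\}$, hence $w$ along it tends to $w(\{x\})=0$; this is exactly the one place the paper uses continuity (``equivalent to $d$ by the continuity of $w$ at each singleton''). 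Second, in your properness step the phrase ``by~\eqref{eq_proper} this chain is finite'' is inaccurate: the chain $\{C\ni a:\ w(C)\le r\}$ is typically infinite in the downward direction when $a$ is non-isolated; what \eqref{eq_proper} actually gives is that the chain has a largest element (there is no infinite strictly increasing subchain), which is all your argument needs. With these two corrections your proposal is a complete and valid alternative proof.
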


\begin{proof}
It is easy to verify that $d_{\ast}$ is an ultrametric. It is equivalent to $d$
by the continuity of $w$ at each singleton. In order to show that $d_{\ast}$ is
proper, let $\overline{B^*_r}(x)$ be a closed $d_{\ast}$-ball of radius $r$ centered
at $x$.  Notice that~\eqref{eq_proper} implies that there is a $d$-ball $B(x)$,
centered at $x$, containing
 $\overline{B^*_r}(x)$. Since $B(x)$ is compact and $B^*_r(x)$ is closed, we conclude
 that the ball $\overline{B^*_r}(x)$ is compact.

\begin{claim}\label{claim_balls}
For any $d$-ball $B(x)$ centered at $x$ there exists $\epsilon>0$ such that
$d_{\ast}$-ball $B^*_{r(\epsilon)}(x)$ of radius $r(\epsilon)=w(B(x))+\epsilon$,
centered at $x$, is equal to  $B(x)$.
\end{claim}

\begin{proof}[Proof of Claim~\ref{claim_balls}]
Clearly, we can assume that $B(x)$ is nondegenerate. The inclusion
$B(x)\subset B^*_{r(\epsilon)}(x)$ holds for each $\epsilon>0$, since if
$y\in B(x)\setminus \{x\}$, then $\{x\}\wedge \{y\}\subset B(x)$, hence
\[
d_{\ast}(x,y)=w(\{x\}\wedge \{y\})\le w(B(x))< w(B(x))+\epsilon.
\]
Suppose that, for each $\epsilon>0$, $B(x)$ is a proper subset of
$B^*_{r(\epsilon)}(x)$. For each $n\in \mathbb N$, choose a point
$y_n\in B^*_{r(\frac1n)}(x)\setminus B(x)$. Then
$B(x)\varsubsetneq \{x\}\wedge \{y_n\}$ and we get

\begin{equation}\label{eq_claim}
0<w(B(x))< w(\{x\}\wedge \{y_n\})=d_{\ast}(x,y_n)< w(B(x))+\frac1n.
\end{equation}
The balls $\{x\}\wedge \{y_n\}$ are contained in a branch of $\mathcal{T}(X,d)$, so
we can choose a subsequence $n_k$ such that  balls
 $\{x\}\wedge \{y_{n_k}\}$ form a decreasing family of sets, in view
of~\eqref{eq_claim}. So $d_{\ast}(x,y_{n_k}) \to w(B(x))$ if $k\to\infty$ and points
 $x, y_{n_k}$, $k\in\mathbb N$,  belong to a compact set $\{x\}\wedge \{y_{n_1}\}$.
It means that $\R(d_{\ast})$ has an accumulation point different from 0 on a compact
set, contrary to property (g).
\end{proof}

It remains to show that each nondegenerate $d_{\ast}$-ball $B^*(x)$ centered at $x$
is a $d$-ball. Let $Z:= \{r>0: B^*(x)= B^*_r(x)\}$. Obviously $Z\neq\emptyset$.
Denote $r'=\inf Z$. Notice that $r'>0$. We have
\begin{equation}\label{eq_balls}
B^*(x)=\{y: d_{\ast}(x,y)\le r'\}.
\end{equation}
Indeed, the inclusion $\subset$ in \eqref{eq_balls} is obvious for any metric.  The inclusion $\supset$  is also trivial in the case $r'\notin Z$.
But the case $r'\in Z$  cannot occur because $B^*(x)=B^*_{r'}(x)$ would mean that $r'$ is an accumulation point of distances $d_{\ast}(x,y)$ for points $y\in B^*(x)$ in the compact set $\overline{B^*_{r'}}(x) = \{y: d_{\ast}(x,y)\le r'\}$, contrary to property (g). This also shows that
\begin{equation}\label{eq_distance}
\text{there is $a\in B^*(x)$ satisfying $ d_{\ast}(x,a)=r'$.}
\end{equation}
Finally, we can see that
\begin{equation}\label{eq_wedge}
 B^*(x)=\{x\}\wedge \{a\}\in \mathcal{T}(X,d).
\end{equation}
In fact, we have already shown that the $d$-ball $\{x\}\wedge \{a\}$ is also a
$d_{\ast}$-ball, so either  $\{x\}\wedge \{a\}\subset B^*(x)$ or
$B^*(x) \varsubsetneq \{x\}\wedge \{a\}$.

Suppose the latter inclusion holds. Then, by~\eqref{eq_balls}
and ~\eqref{eq_distance},  there is a point $y\in \{x\}\wedge \{a\}$ with
$d_{\ast}(x,y)> r'=d_{\ast}(x,a)$. Hence,
$w(\{x\}\wedge \{y\})> w(\{x\}\wedge \{a\})$, so
$\{x\}\wedge \{y\}\varsupsetneq\{x\}\wedge \{a\}$. On the other hand,
$y\in \{x\}\wedge \{a\}$ implies that $\{x\}\wedge \{y\}\subset \{x\}\wedge \{a\}$,
 a contradiction.

Therefore  $\{x\}\wedge \{a\}\subset B^*(x)$. But the inclusion cannot be strict,
since otherwise there is  a point $y\in B^*(x)\setminus \{x\}\wedge \{a\}$ and we
have
\[
w(\{x\}\wedge \{y\})=d_{\ast}(x,y)\le r'=d_{\ast}(x,a)=w(\{x\}\wedge \{a\}),
\]
which implies $\{x\}\wedge \{y\}\subset \{x\}\wedge \{a\}$, a contradiction. The proof of Proposition~\ref{WhitneyLemma} is finished.

\end{proof}

\

\begin{proof}[Proof of Theorem~\ref{t1}]
We are going to construct a countable, locally
finite, path-connected tree $\mathcal{T}\subset2^{X}$ without the least
element and a Whitney map $w$ for $\mathcal{T}\cup F_1$ such that $\R(w)=M$ (recall that $F_1$ is the set of all singletons in $X$).

Let $\Pi=\{P_{1},P_{2},\dots\}$. Embed the tree $\mathcal{T}(\mathbb{N}%
,d_{\max})$ from Example~\ref{ex1} in $2^{X}$ by a one-to-one function $\phi$
such that $\phi(\{n\})=P_{n}$ and $\phi(\{1,2,\dots,n\})=\bigcup_{i=1}^{n}
P_{i}$.

For each $P\in\Pi$, let $\mathcal{T}(P,d_{p})$ be the rooted (at $P$) tree of
closed $d_{p}$-balls of $X$ which are contained in $P$. Observe that if $P$ is
a doubleton, then $\mathcal{T}(P,d_{p})$ splits into two singletons, otherwise
$\mathcal{T}(P,d_{p})$ has an infinite branch contained in the set
$\widetilde{\mathcal{T}(P,d_{p})}$ of non-singleton vertices. Trees
$\mathcal{T}(P,d_{p})$ extend the tree $\phi(\mathcal{T}(\mathbb{N},d_{\max
}))$ and as a result we obtain a tree $\mathcal{T}\subset2^{X}$.

To each branch $\mathcal{L}$ of $\mathcal{T}$ there corresponds a point
$x_{\mathcal{L}}\in X$ such that $\{x_{\mathcal{L}}\}= \bigcap\mathcal{L}$
(since $(X,d_{p})$ is a complete space) and this correspondence is a bijection
between the set of all branches and $X$. Observe that the bijection locally
(inside of each $P\in\Pi$) preserves $d_{p}$-balls. The semi-lattice operation
$\wedge$ on $\mathcal{T}$ can now be extended over the set of all singletons
of $X$ by
\[
\{x_{\mathcal{L}}\}\wedge\{x_{\mathcal{L}^{\prime}}\}=\inf({\mathcal{L}}%
\cap{\mathcal{L}^{\prime}}).
\]

We will now construct a Whitney map $w$ for $\mathcal{T}\cup F_1$ such that $\R(w)=M$.
Enumerate positive numbers in $M$ as $m_{1},m_{2},\dots$ and choose a sequence
$(k_{n})\in M$ such that%

\[
k_{1}>\max\{m_{1},m_{2}\},\quad k_{n+1}>\max\{k_{n},m_{n+1}\}
\]

(this can be done since $M$ is unbounded).

\begin{center}

%\pstree[radius=10pt,levelsep=1cm]{\Tp}{\pstree{\TR{$k_3$}}{\pstree{\TR{$k_2$}}{\pstree{\TR{$k_1$}}{\TR{$m_1$}\TR{$m_2$}}\TR{$m_3$}}\TR{$m_4$}}}

\begin{tikzpicture}[level/.style={level distance = 1.5cm}]

\node  {}
    child{ node  {$k_3$}
            child{ node  {$k_2$}
            	child{ node  {$k_1$}
			child{ node  {$m_1$}}
			child{ node  {$m_2$}}
		}
		child{ node  {$m_3$}}
	}
	child{ node  {$m_4$}}
	};
\end{tikzpicture}

\end{center}

\

Put $w(B)=0$ if $B$ is a singleton, $w(B)=m_{n}$ if $B=P_{n}$ and $w(B)=k_{n}$
if $B=\bigcup_{i=1}^{n} P_{i}$. It remains to define $w$ on the set
$\widetilde{\mathcal{T}(P,d_{p})}$ for each $P\in\Pi$ that contains an
accumulation point. Then, since $0$ must be an accumulation point of $M$ by
the hypothesis, there is a strictly decreasing sequence $w(P)>w_{n}%
\rightarrow0$ in $M$. Now, take a function
\[
s_{P}: \widetilde{\mathcal{T}(P,d_{p})}\setminus\{P\}\to\{w_{1},w_{2}%
,\dots\}
\]
which is order preserving on each branch of $\widetilde{\mathcal{T}(P,d_{p})}$, i.e., $s_{P}(B)< s_{P}(B^{\prime})$ if $B\varsubsetneq B^{\prime}$, such that $s_{P}(B)\leq \diam_{d_{p}}(B)$ and put
\[
w(B)=s_{P}(B)\quad\text{ for}\quad B\in\widetilde
{\mathcal{T}(P,d_{p})}\setminus\{P\}.
\]
The construction of the Whitney map $w$ is thus complete.

The desired ultrametric $d$ is given by the formula
\[
d(x,y) = w(\{x\}\wedge\{y\}) \quad\text{for}\quad x\neq y \quad\text{and}\quad d(x,y)= 0 \quad\text{for}\quad x=y.
\]

Notice that $w(B) = \diam B$ in the metric $d$ and $\mathcal{T}=\mathcal{T}%
(X,d)$.
\end{proof}

\begin{remark}
\emph{As we have already remarked in the proof of Theorem~\ref{t1}, the trees
$\mathcal{T}(X,d_{p})$ and $\mathcal{T}(X,d)$ locally coincide, i.e., the
collections of $d_{p}$-balls and $d$-balls are the same within each $P\in\Pi$.
Whether one can build an ultrametric $d$ on $X$ which satisfies conditions of
the theorem and such that collections of all $d_{p}$-balls and $d$-balls
coincide is an interesting on its own and useful in applications question, see
Section "Hierarchical Laplacian". Example~\ref{ex1} shows that, in general,
the answer is negative. On the other hand, the answer is positive under the
following extra condition:}

There is a partition $\Pi$ of $X$ consisting of $d_{p}$-balls and infinitely
many of the balls are non-singletons. \emph{In terms of the order
$\preccurlyeq$:} there is an infinite antichain in $\mathcal{T}(X,d_{p})$
(i.e., a subset of $\mathcal{T}(X,d_{p})$ whose elements are pairwise
incomparable by $\preccurlyeq$) which contains at most finitely many end-points.

\emph{Notice that a maximal antichain in $\mathcal{T}(X,d_{p})$ is a partition
of $X$.}
\end{remark}

The above condition evidently holds if the ultrametric space $X$ is perfect
(or contains at most finitely many isolated points).

The following example (a particular case of Example~\ref{ex cyclic}) is a good
illustration of the condition in case of discrete $X$.

\begin{example}
\label{ex2} Consider the infinite countable discrete group
\[
X=\bigoplus_{k=1}^{\infty}\mathbb{G}_{k}, \quad\mathbb{G}_{k}=\mathbb{Z}(2)
\]
with the standard ultrametric
\[
d_{p}(x,y)=\min\{k: x-y\in G_{k}\},
\]
where
\[
G_{0}=\{0\},\quad G_{k}=\bigoplus_{1\le i\le k} \mathbb{G}_{i}.
\]
All $d_{p}$-balls are either finite subgroups $G_{k}$ or their cosets
$G_{k}+g$. The balls form a binary tree $\mathcal{T}(X)$ without the least
element and with singletons as its end-points.
\end{example}

\begin{lemma}
\label{partition1} Let $(X,d_{p})$ be a separable proper ultrametric space.
Suppose there is a partition $\mathcal{S}$ of $X$ consisting of $d_{p}$-balls
and infinitely many of the balls are nondegenerate. Then there is a partition
$\Pi$ consisting of $d_{p}$-balls with infinitely many nondegenerate members
$P$ such that $P$ either contains an accumulation point or all immediate
$\preccurlyeq$-successors of $P$ are singletons.
\end{lemma}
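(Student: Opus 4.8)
The plan is to build $\Pi$ by refining the given partition $\mathcal{S}$ one ball at a time, modifying only the ``bad'' nondegenerate members. First I would classify the nondegenerate members of $\mathcal{S}$. Since $(X,d_p)$ is proper, every $d_p$-ball is compact (by property (a) a ball is closed and of finite diameter, hence a closed subset of a compact closed ball), and a compact metric space with no accumulation point is finite. Thus each nondegenerate $S\in\mathcal{S}$ falls into exactly one of two classes: either $S$ contains an accumulation point of $X$, in which case it already satisfies the first alternative and I keep it unchanged, or $S$ is a finite set, which forces me to refine it. Note that all points of a finite ball are isolated, so the singletons produced in the refinements below are genuine $d_p$-balls.

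For a finite nondegenerate ball $S$, consider the finite subtree of $\mathcal{T}(X,d_p)$ consisting of all $d_p$-balls contained in $S$; call a ball $Q\subseteq S$ \emph{penultimate} if it is nondegenerate and all its immediate $\preccurlyeq$-successors are singletons. As a nonempty finite poset, the nondegenerate sub-balls of $S$ have minimal elements, and a minimal one is precisely a penultimate ball, so $S$ contains at least one. Distinct penultimate balls are disjoint: if one were properly contained in another it would sit inside a singleton successor of the larger one, which is impossible for a nondegenerate ball. Taking all penultimate sub-balls of $S$ together with the singletons $\{x\}$ for the finitely many points $x\in S$ lying in no penultimate ball yields a partition of $S$ into $d_p$-balls whose nondegenerate members are exactly the penultimate ones, i.e. satisfy the second alternative.

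Assembling $\Pi$ from the singleton members of $\mathcal{S}$, the nondegenerate members containing an accumulation point, and the refinements of the finite nondegenerate members produces a partition of $X$ into $d_p$-balls, every nondegenerate member of which satisfies one of the two required alternatives. It remains to verify that $\Pi$ still has infinitely many nondegenerate members, and this is where I expect the only real care to be needed. Writing $\mathcal{S}_{nd}$ for the (infinite) set of nondegenerate members of $\mathcal{S}$, I would split into cases: if infinitely many balls of $\mathcal{S}_{nd}$ contain an accumulation point, these are all retained and we are done; otherwise all but finitely many balls of $\mathcal{S}_{nd}$ are finite, and since each finite nondegenerate ball contributes at least one penultimate ball to its refinement, and these arise from pairwise disjoint members of $\mathcal{S}$ and hence are distinct, we again obtain infinitely many nondegenerate members of $\Pi$. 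The main (mild) obstacle is thus not any single hard step but the bookkeeping that simultaneously guarantees the structural property for every nondegenerate member and the persistence of infinitely many of them.
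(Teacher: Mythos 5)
Your proposal is correct and follows essentially the same route as the paper: the paper likewise keeps the members of $\mathcal{S}$ that contain accumulation points and, inside each finite nondegenerate member $B$, passes to a single ball $B(b)$ (the immediate $\preccurlyeq$-predecessor of a chosen singleton $\{b\}$) filled out by singletons, whereas you take all minimal nondegenerate (\emph{penultimate}) sub-balls at once. If anything, your version is slightly more careful than the paper's: for an arbitrarily chosen $b$ the ball $B(b)$ need not have all its immediate successors singletons (e.g.\ $b=z$ in $\{x,y,z\}$ with $d(x,y)<d(x,z)$), so $b$ must be taken inside a minimal nondegenerate sub-ball --- precisely the point your penultimate-ball analysis, together with your explicit disjointness and counting checks, handles cleanly.
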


\begin{proof}
Let $\mathcal{B}=\{B_{1},B_{2},\dots\}\subset\mathcal{S}$ be the family of all
nondegenerate elements of $\mathcal{S}$.

We modify the partition $\mathcal{S}$ as follows. For each $B\in\mathcal{B}$
which contains no accumulation point (i.e., $B$ is finite), choose a point
$b\in B$ and let $B(b)\subset B$ be a ball which is an immediate
$\preccurlyeq$-predecessor of $\{b\}$. The modified partition $\Pi$ consists
of all elements of $\mathcal{S}$ which contain an accumulation point, all
balls of the form $B(b)$ and all remaining singletons.
\end{proof}

\begin{theorem}
\label{t2} Let $(X,d_{p})$ be a separable proper ultrametric space. Suppose
there is a partition of $X$ consisting of $d_{p}$-balls and infinitely many of
the balls are non-singletons. Then, for every set $M\subset[0,\infty)$
satisfying the hypotheses of Theorem~\ref{t1}, there is an equivalent proper
ultrametric $d$ on $X$ such that $\R(d)=M$, the collections of $d_{p}%
$-balls and $d$-balls coincide and $d\le d_{p}$ on
balls which are  proper subsets of those elements of the partition that contain  accumulation points.
\end{theorem}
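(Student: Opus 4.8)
The plan is to reduce the whole statement to the construction of a single Whitney map and then invoke Proposition~\ref{WhitneyLemma}. Concretely, I will build a Whitney map $w$ on $\mathcal{T}(X,d_p)\cup F_1$ satisfying the properness condition \eqref{eq_proper}, with $\R(w)=M$, and with $w(B)\le\diam_{d_p}(B)$ for every ball $B$ that is a proper subset of a member of the partition containing an accumulation point. Granting such a $w$, Proposition~\ref{WhitneyLemma} yields the proper ultrametric $d:=d_{\ast}$, $d(x,y)=w(\{x\}\wedge\{y\})$, having exactly the same balls as $d_p$. Since every non-singleton ball equals $\{x\}\wedge\{y\}$ for a suitable pair, $\R(d)=\R(w)=M$; and for $x,y$ inside such a $B$ one has $d(x,y)=w(\{x\}\wedge\{y\})\le\diam_{d_p}(\{x\}\wedge\{y\})=d_p(x,y)$ by property (a), which is the last assertion. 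Thus the theorem becomes the existence of $w$.

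For this I first fix the combinatorics. Applying Lemma~\ref{partition1} (refining, if necessary, each non-nice finite member into minimal non-singleton balls plus singletons) I may assume every non-singleton member of $\Pi$ either contains an accumulation point or has all immediate $\preccurlyeq$-successors singletons. Two structural facts drive the rest. First, the balls containing a fixed point are totally ordered by inclusion and their union is $X$, so the ascending part of $\mathcal{T}(X,d_p)$ is a single infinite trunk into which every infinite ascending chain eventually merges. Second, each ball is compact and is partitioned by the members of $\Pi$ it meets, hence meets only finitely many of them; consequently the members of $\Pi$ are precisely the minimal balls of the upper subtree $\mathcal{T}^+$ of those balls that contain a member of $\Pi$, and below every ball of $\mathcal{T}^+$ sit only finitely many balls of $\mathcal{T}^+$.

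Now I define $w$. Enumerate the positive elements of $M$ as $m_1,m_2,\dots$ and the non-singleton members of $\Pi$ as $Q_1,Q_2,\dots$, and set $w(Q_n)=m_n$ and $w=0$ on singletons; as the $Q_n$ are pairwise incomparable this already gives $\R(w)\supseteq M\setminus\{0\}$ with no order conflict among these leaves. Inside each $Q_n$ containing an accumulation point I reuse the device of Theorem~\ref{t1}: an order-preserving map of its interior balls into a sequence of values of $M$ decreasing to $0$ (available since $0$ is an accumulation point of $M$) and bounded above by $\diam_{d_p}$; the members with only singleton successors require nothing below them. Finally I extend $w$ over $\mathcal{T}^+$ by induction up the trunk: having valued the finite bush of $\mathcal{T}^+$ below a trunk ball, I assign the trunk ball a value of $M$ strictly above the finite maximum of its children, chosen so as to tend to $\infty$ — possible because $M$ is unbounded. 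All values used lie in $M$, so $\R(w)=M$.

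It then remains to verify that $w$ is a Whitney map with \eqref{eq_proper}. Strict monotonicity and the vanishing exactly on singletons are built in; continuity holds because the accumulation points of $X$ all lie inside accumulation-point members, where $w\to0$ along descending chains. Every infinite ascending chain is eventually cofinal with the trunk, along which $w\to\infty$, giving \eqref{eq_proper}. The main obstacle is exactly this simultaneous global balancing: $w$ must be order-preserving \emph{onto} $M$, unbounded upward, yet bounded by $\diam_{d_p}$ on the way down inside accumulation-point members. The upward demand is reconciled with finiteness by compactness — no ball meets infinitely many $Q_n$, so none is forced to exceed infinitely many of the unbounded values $m_n$ — while the downward bound together with the need to reach $0$ is supplied by $0$ being an accumulation point of $M$. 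The most delicate bookkeeping is inside the accumulation-point members, where the assignment must stay strictly below the pre-assigned leaf value $w(Q_n)=m_n$, respect $w\le\diam_{d_p}$, and still descend to $0$; the hypothesis that nice finite members have no interior balls is what prevents any analogous difficulty in the discrete case, where $M$ need not accumulate at $0$.
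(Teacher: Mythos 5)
Your proposal is correct, and its skeleton is the paper's: normalize the partition via Lemma~\ref{partition1}, build a Whitney map $w$ on $\mathcal{T}(X,d_p)\cup F_1$ with $\R(w)=M$, reuse the device from the proof of Theorem~\ref{t1} (values of $M$ decreasing to $0$, dominated by $\diam_{d_p}$) inside the members containing accumulation points, and pass to $d(x,y)=w(\{x\}\wedge\{y\})$ — you make the appeal to Proposition~\ref{WhitneyLemma} explicit where the paper leaves it implicit. The genuine difference is in the one global step, the valuation of the upper tree $\mathcal{T}^+$ of balls lying above the partition. The paper's mechanism is choice-free and local: it splits $M$ into blocks $M_k=M\cap(l_{k-1},l_k]$ with $l_k\to\infty$, and assigns to a ball $B=P_{i_1}\cup\dots\cup P_{i_n}$ a value $w(B)\in M_{s(B)}$ with the additive index $s(B)=\sum_t\kappa(w(P_{i_t}))$; since $B\varsubsetneq B'$ forces $s(B)<s(B')$, strict monotonicity and the divergence condition \eqref{eq_proper} hold automatically along \emph{every} infinite ascending chain, with no reference point and no exhaustion of the tree. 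Your mechanism instead fixes a base point, values the finite bushes of $\mathcal{T}^+$ by induction up the trunk (possible, as you note, because each ball of $\mathcal{T}^+$ has only finitely many $\mathcal{T}^+$-balls below it, by compactness), forces divergence only along the trunk, and then reduces \eqref{eq_proper} to the cofinality claim that every infinite ascending chain eventually merges into the trunk. That claim is true but carries the real content of your variant and deserves its half-line of proof: along an infinite ascending chain the $d_p$-diameters strictly increase and, were they bounded, would accumulate at a positive value inside a compact closed ball, contradicting property (g); hence they tend to $\infty$, so the chain eventually swallows the smallest ball containing a given pair of points and hence the base point, putting its tail on the trunk, where monotonicity of $w$ past every trunk ball gives $w(B_n)\to\infty$. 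In short: the paper's $s(B)$-bookkeeping buys uniform monotonicity and divergence for free; your trunk induction is more hands-on and needs this extra properness lemma, but it is elementary and everything else in your argument (range $\R(w)=M$ via the enumeration $w(Q_n)=m_n$, continuity at accumulation singletons via $w\le\diam_{d_p}$ inside members, and the final inequality $d\le d_p$ from $d_p(x,y)=\diam_{d_p}(\{x\}\wedge\{y\})$) checks out.
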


\begin{proof}
By Lemma~\ref{partition1} there is a partition $\Pi$ such that each
nondegenerate element $P\in\Pi$ either contains an accumulation point or all
immediate $\preccurlyeq$-successors of $P$ are singletons. Let $\{B_{1}%
,B_{2},\dots\}\subset\Pi$ be the family of all nondegenerate elements of $\Pi$.

We slightly modify the proof of Theorem~\ref{t1} by considering the original
tree of $d_{p}$-balls over partition $\Pi$ instead of tree $\phi
(\mathcal{T}(\mathbb{N},d_{\max}))$.

Let $0=l_{0}< l_{1}< l_{2}\dots\to\infty$ be a sequence such that
\[
M_{k}:= M\cap(l_{k-1},l_{k}]\neq\emptyset\quad\text{for each $k>0$}.
\]
Consider a function $\kappa:M\to\mathbb{N}$ such that $\kappa(m)$ is the
(unique) index satisfying $m\in M_{\kappa(m)}$.

Let $M\setminus\{0\}=\{m_{1},m_{2},\dots\}$.

Let us define a Whitney map $w$ for $\mathcal{T}(X,d_{p})\cup F_1$. Put $w=0$ for
all singletons  and let $w(B_{i})= m_{i}$. Each $d_{p}$-ball
$B$ preceding some $P\in\Pi$ uniquely decomposes into the union of distinct
elements of $\Pi$ (one of them is $P$ itself):
\[
B=P_{i_{1}}\cup\dots\cup P_{i_{n}} \quad\text{for some $i_{1}<\dots< i_{n}$}.
\]
For such ball $B$, choose $w(B)$ as a number in $M_{s(B)}$, where
\[
s(B)=\sum_{t=1}^{n} \kappa(w(P_{i_{t}})).
\]

If a non-singleton ball $B$ succeeds a $P\in\Pi$ in $\mathcal{T}(X,d_{p})$,
then $P\in\mathcal{B}$ and $B\in\widetilde{\mathcal{T}(P,d_{p})}$, so we can
define $w(B)$ as in the the proof of Theorem~\ref{t1}. The metric
\[
d(x,y) = w(\{x\}\wedge\{y\})
\quad\text{if}\quad x\neq y \quad\text{and}\quad d(x,y)=0 \quad\text{if}\quad x=y
\]
is the required one.

We can also observe that $d(x,y)\le d_{p}(x,y)$ for $x,y\in B$ and each
$d_{p}$-ball $B$ properly contained in $P\in\Pi$.
\end{proof}

\begin{remark}
\emph{We can compare Theorem~\ref{t1} with a result in~\cite[Theorem 2]{Lem}
which says that by a slight change of an ultrametric $d$ in an arbitrary
separable ultrametric space $(X$,d) one can get an equivalent ultrametric
$r\leq d$ that assumes only dyadic rational values. No preservation of balls
is discussed in~\cite{Lem}.}
\end{remark}

\section{Hierarchical Laplacian}\label{HierLapl}

\setcounter{equation}{0}The aim of this section is to justify the properties
of the hierarchical Laplacian listed in the Introduction. Let $(X,d)$ be a
locally compact, separable, proper ultrametric space. Let $m$ be a Radon
measure on $X$ such that $m(B)>0$ for each ball $B$ of positive diameter and
such that $m(\{x\})=0$ if and only if $x$ is a non-isolated point. Let
$\mathcal{D}$ be the set of locally constant functions having compact support.

\begin{definition}\label{choice function}
A choice function $C(B)$ is a function defined on the set $\mathcal{B}$ of all
non-singleton balls $B$, taking values in $(0,\infty)$ and such that%
\begin{enumerate}
\item $\lambda(B):=\sum\limits_{T\in\mathcal{B}:\text{ }B\subseteq
T}C(T)<\infty,$

\item $\lim_{B\downarrow\{x\}}$ $\lambda(B)$ $=\infty$ if $x$ is not an
isolated point.

\end{enumerate}
\end{definition}

Given a choice function $C(B)$ and a measure $m$ as above we consider the
hierarchical Laplacian $(L_{C},\mathcal{D)}$ defined pointwise by the equation
(\ref{hlaplacian}), that is,%
\[
L_{C}f(x):=-\sum\limits_{B\in\mathcal{B}:\text{ }x\in B}C(B)\left(
P_{B}f-f(x)\right)  .
\]

\begin{lemma}
\label{L-C-domain}$(L_{C},\mathcal{D)}$ acts in all spaces $L^{p},$ $1\leq
p\leq\infty.$
\end{lemma}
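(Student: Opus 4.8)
The plan is to reduce the whole statement to computing $L_C$ on the indicator of a single ball. Any $f\in\mathcal{D}$ is locally constant with compact support, hence takes finitely many values, each level set being a compact-open set, i.e. a finite disjoint union of balls. In a proper ultrametric space every ball is a closed subset of a compact closed ball, so every ball is compact and has finite $m$-measure. Thus $f=\sum_i c_i\mathbf{1}_{B_i}$ is a finite linear combination of indicators of balls, and by linearity it suffices to show that $g:=L_C\mathbf{1}_B\in L^p$ for a single ball $B$ and every $1\le p\le\infty$.

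Fix $B$. First I would make the pointwise formula for $g$ explicit, using that the balls containing a given point $x$ form a chain: any ball $B'\ni x$ satisfies $B'\subseteq B$, $B'\supseteq B$, or $B'\cap B=\emptyset$. In the first and third cases $P_{B'}\mathbf{1}_B-\mathbf{1}_B(x)=0$, so only the balls $B'\supseteq B$ contribute. Writing $B=B^{(0)}\subsetneq B^{(1)}\subsetneq\cdots$ for the chain of balls above $B$ (infinite, since $X$ is non-compact, and with $\bigcup_j B^{(j)}=X$ because $B\wedge\{y\}$ is one of the $B^{(j)}$ for every $y$), one finds that $g$ is constant on $B$ with value $c_0=\sum_{i\ge1}C(B^{(i)})\bigl(1-m(B)/m(B^{(i)})\bigr)$, and constant on each annulus $A_j=B^{(j)}\setminus B^{(j-1)}$ with value $g_j=-m(B)\sum_{i\ge j}C(B^{(i)})/m(B^{(i)})$. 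All these series converge absolutely because $\sum_{i\ge1}C(B^{(i)})$ is a subsum of $\lambda(B^{(1)})<\infty$ (condition (1) of Definition~\ref{choice function}) and $m(B^{(i)})\ge m(B)>0$; this simultaneously shows the defining sum for $L_C\mathbf{1}_B(x)$ converges at every $x$.

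Next I would prove the two endpoint estimates. Since $m(B)\le m(B^{(j)})$, both $c_0$ and each $g_j$ are bounded in modulus by $\lambda(B^{(1)})$, giving $g\in L^\infty$. For integrability, decompose $X=B\sqcup\bigsqcup_{j\ge1}A_j$; on $B$ we get $\int_B|g|\,dm=c_0\,m(B)$, and over the annuli a Tonelli interchange of the two nonnegative summations, whose inner sum telescopes to $m(B^{(i)})-m(B)$, yields $\int_{X\setminus B}|g|\,dm=m(B)\sum_{i\ge1}C(B^{(i)})\bigl(1-m(B)/m(B^{(i)})\bigr)=m(B)\,c_0<\infty$. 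Hence $g\in L^1$, and the elementary bound $\int|g|^p\,dm\le\|g\|_\infty^{\,p-1}\int|g|\,dm$ places $g$ in $L^p$ for every $1\le p\le\infty$; linearity then finishes the proof.

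The main difficulty, and the reason the lemma is not trivial, is the case $m(X)=\infty$: there $L_C\mathbf{1}_B$ is \emph{not} compactly supported but nonzero throughout $X$, so $L_C$ does not preserve $\mathcal{D}$ and one cannot argue by "finitely many nonzero terms''. The decisive step is the telescoping rearrangement of the double series, which converts the only available hypothesis $\sum_i C(B^{(i)})\le\lambda(B^{(1)})<\infty$ into integrability of the slowly decaying tails $g_j$; a cruder $\ell^\infty$-type bound on $g_j$ would reduce $\int_{X\setminus B}|g|$ to $\sum_j (m(B^{(j)})-m(B^{(j-1)}))/m(B^{(j)})$, which may diverge, so the exact cancellation is essential.
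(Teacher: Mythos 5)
Your proof is correct and takes essentially the same route as the paper: reduce to indicators of balls, compute $L_C\mathbf{1}_B$ along the chain $B=B^{(0)}\varsubsetneq B^{(1)}\varsubsetneq\cdots$ of balls above $B$, get the $L^\infty$ bound from $\sum_{i\ge1}C(B^{(i)})=\lambda(B^{(1)})<\infty$, get the $L^1$ bound by integrating over the annuli, and finish via $L^1\cap L^\infty\subset L^p$. Your Tonelli-plus-telescoping interchange is exactly the paper's Abel transformation in slightly cleaner form (it avoids checking that the boundary term $m(T_l)\sum_{B:\,T_l\subseteq B}C(B)/m(B)\to 0$), and your identity $\int_{X\setminus B}|g|\,dm=m(B)\,c_0$ matches the paper's computation $\int u_2\,dm=\lambda(T)$.
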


\begin{proof}
Since the intersection $L^{1}\cap L^{\infty}$ is a subset of each $L^{p}$, $p>1$,
  it is enough to prove the claim if $p$ equals $1$ and $\infty.$
For any ball $T$ of positive measure we set $f_{T}=\mathbf{1}_{T}/m(T)$ and
compute $L_{C}(f_{T})(x),$%

\begin{align*}
L_{C}(f_{T})(x)  &  =-\sum\limits_{x\in B}C(B)\left(  P_{B}(f_{T}%
)-f_{T}(x)\right) \\
&  =-\left(  \sum\limits_{x\in B,T\subseteq B}+\sum\limits_{x\in B,B\cap
T=\varnothing}+\sum\limits_{x\in B,B\subset T}\right)  C(B)\left(  P_{B}%
(f_{T})-f_{T}(x)\right)  .
\end{align*}
Next observe that for any ball $B$ centered at $x,$%
\[
P_{B}\left(  f_{T}\right)  =\left\{
\begin{array}
[c]{ccc}%
f_{T}(x) & \text{if} & B\subseteq T\\
f_{B}(x) & \text{if} & T\subseteq B\\
0 & \text{if} & B\cap T=\varnothing
\end{array}
\right.  .
\]
It follows that%
\begin{align*}
L_{C}(f_{T})(x)  &  =-\sum\limits_{x\in B,T\subseteq B}C(B)\left(
f_{B}(x)-f_{T}(x)\right) \\
&  =\left(  \sum\limits_{x\in B,T\subseteq B}C(B)\right)  f_{T}(x)-\sum
\limits_{x\in B,T\subseteq B}C(B)f_{B}(x).
\end{align*}
Clearly we have%
\[
\left(  \sum\limits_{x\in B,T\subseteq B}C(B)\right)  f_{T}(x)=\lambda
(T)f_{T}(x),
\]
whence%
\begin{equation}
L_{C}(f_{T})(x)=\lambda(T)f_{T}(x)-\sum\limits_{x\in B,T\subseteq B}%
C(B)f_{B}(x). \label{Eq.L-C-f-T}%
\end{equation}
Evidently, $u_{1}=\lambda(T)f_{T}$ is in $L^{1}\cap L^{\infty}.$ For the
second term in (\ref{Eq.L-C-f-T}), call it $u_{2},$ we have%
\begin{align*}
u_{2}(x)  &  =\sum\limits_{B:\text{ }\{x\}\wedge T\subseteq B}C(B)f_{B}%
(x)=\sum\limits_{B:\text{ }\{x\}\wedge T\subseteq B}\frac{C(B)}{m(B)}\\
&  \leq\frac{1}{m(T)}\sum\limits_{B:\text{ }\{x\}\wedge T\subseteq B}%
C(B)\leq\frac{\lambda(T)}{m(T)}.
\end{align*}
Let $T:=T_{0}\subset T_{1}\subset T_{2}\subset...\subset X$ be an increasing
sequence of balls such that each $T_{l+1}$ is the immediate predecessor of
$T_{l}.$ We set $T_{-1}=\varnothing$ and write%
\begin{align*}
\int\limits_{X}u_{2}dm  &  =\sum\limits_{i=0}^{\infty}\int\limits_{T_{i}%
\backslash T_{i-1}}u_{2}dm\\
&  =m(T)\sum\limits_{B:\text{ }T\subseteq B}\frac{C(B)}{m(B)}+\left(
m(T_{1})-m(T)\right)  \sum\limits_{B:\text{ }T_{1}\subseteq B}\frac
{C(B)}{m(B)}+...\text{ .}%
\end{align*}
Applying the Abel transformation we obtain%
\begin{align*}
\int u_{2}dm  &  =\lim_{l\rightarrow\infty}\left[  \sum\limits_{i=0}%
^{l-1}m(T_{i})\left(  \sum\limits_{B:\text{ }T_{i}\subseteq B}\frac
{C(B)}{m(B)}-\sum\limits_{B:\text{ }T_{i+1}\subseteq B}\frac{C(B)}%
{m(B)}\right)  \right. \\
&  \left.  +\text{ }m(T_{l})\sum\limits_{B:\text{ }T_{l}\subseteq B}%
\frac{C(B)}{m(B)}\right] \\
&  =\lim_{l\rightarrow\infty}\left(  \sum\limits_{i=0}^{l-1}C(T_{i}%
)+m(T_{l})\sum\limits_{B:\text{ }T_{l}\subseteq B}\frac{C(B)}{m(B)}\right)
=\lambda(T),
\end{align*}
whence, in particular, $u_{2}$ is in $L^{1}.$ All the above shows that
$L_{C}(f_{T})$ belongs to both $L^{1}$ and $L^{\infty}.$ This finishes the
proof since any locally constant function with compact support is a finite
linear combination of the functions $f_{T}$.
\end{proof}

Let $\{f_{B,B^{\prime}}\}_{B^{\prime}\in\mathcal{B}}$ be the family of
functions defined by the equation (\ref{eigenfunction}), i.e.,%
\[
f_{B,B^{\prime}}=\frac{1}{m(B)}\mathbf{1}_{B}-\frac{1}{m(B^{\prime}%
)}\mathbf{1}_{B^{\prime}}.
\]
It is easy to see that all functions $f_{B,B^{\prime}}\in\mathcal{D}$ and that
for any two distinct balls $B^{\prime}$ and $C^{\prime}$ the functions
$f_{B,B^{\prime}}$ and $f_{C,C^{\prime}}$ are orthogonal in $L^{2}%
=L^{2}(X,m).$

\begin{proposition}
\label{Spectral properties}In the above notation the following properties hold.
\end{proposition}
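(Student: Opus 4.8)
The plan is to establish, in order, the eigenfunction identity, the completeness of the system $\{f_{B,B^\prime}\}$, essential self-adjointness, and the description of $\mathsf{Spec}(L_C)$, relying throughout on the computation already performed in Lemma~\ref{L-C-domain}. First I would verify that each $f_{B,B^\prime}$ is an eigenfunction with eigenvalue $\lambda(B^\prime)$. Writing $f_{B,B^\prime}=f_B-f_{B^\prime}$ with $f_T=\mathbf{1}_T/m(T)$, I would apply formula~(\ref{Eq.L-C-f-T}) to $f_B$ and to $f_{B^\prime}$ and subtract. Since $B\subset B^\prime$ are nearest neighbours, the index set $\{A:B\subseteq A\}$ decomposes as $\{B\}\cup\{A:B^\prime\subseteq A\}$, so the tail sums $\sum_{B^\prime\subseteq A}C(A)f_A$ cancel and one is left with $L_C f_{B,B^\prime}=(\lambda(B)-C(B))\,f_B-\lambda(B^\prime)\,f_{B^\prime}$. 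The same decomposition gives the identity $\lambda(B)=C(B)+\lambda(B^\prime)$, which collapses the right-hand side to $\lambda(B^\prime)f_{B,B^\prime}$, as desired.

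Next, for completeness I would show that any $g\in L^2$ orthogonal to every $f_{B,B^\prime}$ vanishes. Noting that $\langle g,f_B\rangle=P_B g$, orthogonality forces $P_B g=P_{B^\prime}g$ for nearest-neighbour pairs, hence the ball-average $P_B g$ is constant along any increasing chain of balls through a fixed point. Letting $B\downarrow\{x\}$ and invoking the martingale (Lebesgue differentiation) theorem along the ball tree yields $P_B g\to g(x)$ for $m$-almost every $x$; letting $B\uparrow X$ gives $P_B g\to 0$ when $m(X)=\infty$ (by Cauchy--Schwarz, $|P_B g|\le\|g\|_2\,m(B)^{-1/2}$) and $P_B g\to m(X)^{-1}\int g\,dm$ when $m(X)<\infty$, the latter constant being killed by orthogonality to the extra eigenfunction $f_{X,X^\prime}=1/m(X)$. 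In both cases $g=0$ a.e., so together with the orthogonality already recorded the family $\{f_{B,B^\prime}\}$ is a complete orthogonal system in $L^2$.

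Essential self-adjointness of $(L_C,\mathcal{D})$ then follows from the standard criterion that a symmetric operator admitting a complete orthogonal set of eigenvectors inside its domain is essentially self-adjoint; here every $f_{B,B^\prime}$ lies in $\mathcal{D}$. For the spectrum, the self-adjoint closure acts diagonally in the orthonormal basis obtained by normalising the $f_{B,B^\prime}$, so $\mathsf{Spec}(L_C)$ is the closure of the point spectrum $\{\lambda(B^\prime):B^\prime\in\mathcal{B}\}$; the point $0$ belongs to it either as the eigenvalue attached to $f_{X,X^\prime}$ (when $m(X)<\infty$) or as an accumulation point forced by condition~(2) of Definition~\ref{choice function}, since $\lambda(B)\to\infty$ as $B\downarrow\{x\}$ at any non-isolated $x$.

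The step I expect to be the main obstacle is the almost-everywhere convergence $P_B g\to g$ as $B\downarrow\{x\}$ in the completeness argument. One must justify the differentiation/martingale statement on the tree of balls, separating the case of isolated points (where $m(\{x\})>0$ and the convergence is immediate because $\{x\}$ is itself a ball) from that of non-isolated points (where $m(\{x\})=0$ and genuine convergence is needed), and simultaneously handling the dichotomy $m(X)<\infty$ versus $m(X)=\infty$ governing the behaviour at the top of the tree; property~(b) and properness of $(X,d)$ are precisely what make the nested balls a bona fide filtration to which the convergence theorem applies.
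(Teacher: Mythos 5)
Your proof of part (2) is essentially the paper's proof: both apply the identity~(\ref{Eq.L-C-f-T}) to $f_B$ and $f_{B'}$, subtract, and use that the only ball containing $B$ but not $B'$ is $B$ itself, so that $\lambda(B)=C(B)+\lambda(B')$ collapses the result to $\lambda(B')f_{B,B'}$. For part (1), however, you take a genuinely different route. You argue by duality: a function $g$ orthogonal to all $f_{B,B'}$ has ball averages $P_Bg$ constant over the whole tree, and you then kill this constant via martingale/differentiation convergence $P_Bg\to g(x)$ a.e.\ as $B\downarrow\{x\}$, the bound $|P_Bg|\le\|g\|_2\,m(B)^{-1/2}$ as $B\uparrow X$ when $m(X)=\infty$, and orthogonality to $f_{X,X'}$ when $m(X)<\infty$. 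The paper instead argues constructively: it expands $f_B=\sum_{T:\,B\subseteq T}f_{T,T'}$ (equation~(\ref{f_series})) and checks $L^2$-convergence by the telescoping norm identity $\sum_{T:\,B\subseteq T}\left(\frac{1}{m(T)}-\frac{1}{m(T')}\right)=\|f_B\|^2$, so every $f_B$, hence every element of the dense subspace $\mathcal{D}$, lies in the closed span of the eigenfunctions. The paper's route is more elementary --- it needs no filtration, no a.e.\ differentiation theorem, and no $\sigma$-finiteness bookkeeping, precisely the points you flagged as the main obstacle --- and the expansion~(\ref{f_series}) is reused later in the proof of Proposition~\ref{Isotropic semigroup}, so it earns its keep twice. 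Your route is nevertheless sound: in an ultrametric space the balls of a fixed radius form a countable partition and these partitions refine as the radius decreases, so the martingale theorem applies without any Vitali-covering issues, and your case analysis (isolated versus non-isolated points, $m(X)<\infty$ versus $m(X)=\infty$) is exactly the right bookkeeping. The deduction of essential self-adjointness from symmetry plus a complete orthogonal system of eigenvectors in the domain matches the paper.

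One correction to your closing aside (which in any case concerns Proposition~\ref{p-Spectrum} rather than the statement being proved): when $m(X)=\infty$, the point $0$ lies in $\mathsf{Spec}(L_C)$ not because of condition (2) of Definition~\ref{choice function} --- that condition gives $\lambda(B)\to\infty$ as $B\downarrow\{x\}$, i.e.\ accumulation at $+\infty$ --- but because of condition (1): along an increasing chain $T_0\subset T_1\subset\dots$ exhausting the non-compact space, $\lambda(T_l)$ is the tail of the convergent series $\sum_{T\supseteq T_0}C(T)$, hence $\lambda(T_l)\to 0$. When $m(X)<\infty$ your explanation via the eigenfunction $f_{X,X'}$ with $\lambda(X')=0$ is correct.
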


\begin{enumerate}
\item $\{f_{B,B^{\prime}}\}_{B^{\prime}\in\mathcal{B}}$ is a complete system
in $L^{2}.$

\item $L_{C}(f_{B,B^{\prime}})(x)=\lambda(B^{\prime})f_{B,B^{\prime}}(x),$ for
any $x\in X$ and $B^{\prime}\in\mathcal{B}.$

In particular, $(L_{C},\mathcal{D})$ is a non-negative definite essentially
self-adjoint operator in $L^{2}.$ By abuse of notation, we shell write
$(L_{C},\mathsf{Dom}_{L_{C}})$ for its unique self-adjoint extension.
\end{enumerate}

\begin{proof}
For the first claim, consider $f_{B}=$ $\mathbf{1}_{B}/m(B)$ for any ball $B$
of positive measure and observe that the series%
\begin{equation}
f_{B}=\sum\limits_{T:\text{ }B\subseteq T}f_{T,T^{\prime}} \label{f_series}%
\end{equation}
converges pointwise, and since%
\[
\sum\limits_{T:\text{ }B\subseteq T}\left\Vert f_{T,T^{\prime}}\right\Vert
^{2}=\sum\limits_{T:\text{ }B\subseteq T}\left(  \frac{1}{m(T)}-\frac
{1}{m(T^{\prime})}\right)  =\left\Vert f_{B}\right\Vert ^{2},
\]
the series (\ref{f_series}) converges in $L^{2}$ as well. This evidently
proves the claim.

For the second claim, fix a couple of closest neighbors $T$ $\subset
T^{\prime}$ and write the equation (\ref{Eq.L-C-f-T}) for both $T$ and
$T^{\prime}.$ Subtracting the $T^{\prime}$-equation from the $T$-equation we
obtain%
\begin{align*}
L_{C}(f_{T,T^{\prime}})(x)  &  =L_{C}(f_{T})(x)-L_{C}(f_{T^{\prime}})(x)\\
&  =\lambda(T^{\prime})f_{T}(x)-\lambda(T^{\prime})f_{T^{\prime}}%
(x)=\lambda(T^{\prime})f_{T,T^{\prime}}(x).
\end{align*}
as desired.

The operator $(L_{C},\mathcal{D})$ acts in $L^{2}$ by Lemma \ref{L-C-domain},
its symmetry follows by inspection. Since $(L_{C},\mathcal{D})$ has a complete
system of eigenfunctions, it is essentially self-adjoint, i.e. it admits a
unique self-adjoint extension. The proof is finished.
\end{proof}

The modified ultrametric $d_{\ast}$ associated with the operator
$(L_{C},\mathcal{D})$ is defined by%
\begin{equation}
d_{\ast}(x,y)=\left\{
\begin{array}
[c]{ccc}%
1/\lambda(\{x\}\wedge\{y\}) & \text{if} & x\neq y\\
0 & \text{if} & x=y
\end{array}
\right.  . \label{d-modified}%
\end{equation}
Observe that the function $w: \mathcal{T}(X,d)\cup F_1\to [0,\infty)$,
$w(B):= 1/\lambda(B)$ and $w=0$ at each singleton, is a Whitney map.
By Proposition~\ref{WhitneyLemma},  $d_{\ast}$ is a proper ultrametric which induces the same topology and
the same collection of balls
as $d$ and, as one easily verifies,%
\[
\lambda(B)=\frac{1}{\diam_{\ast}(B)}.
\]

\begin{proof}[Proof of Theorem~\ref{generality of spectrum}]
Assume an ultrametric space $(X,d)$ and a set $S\subseteq\lbrack0,\infty)$ satisfy
the hypotheses of Theorem~\ref{generality of spectrum}. Let M be a countable dense
subset of $S^{-1}\cup
\{0\}$ containing $0$, where $S^{-1} = \{s^{-1}: s\in S, s>0 \}$. Let $d'$ be an
equivalent metric with $\R(d')= M$, as guaranteed by Theorem~\ref{ultrametric d-d'}
and  let $L_C$
be the hierarchical Laplacian on the ultrametric space $(X,d')$
corresponding to a choice function $C(B)$ such that  $\R(d') = \R(d'_{\ast})$. We can
 choose, for
instance, $C(B)=1/\diam'(B)-1/\diam'(B')$ where ball $B'$ is the immediate
predecessor of $B$.
Then we have
\[
 S = \overline{ M^{-1} } = \overline{\{ 1/d'(x,y); x \neq y \}} =
 \overline{\{ 1/d'_{\ast}(x,y); x \neq y \}} =
 \mathsf{Spec}(L_C)
\]
which completes the proof of Theorem~\ref{generality of spectrum}.
\end{proof}

\

Let $P_{t}=\exp(-tL_{C}),$ $t\geq0,$ be a symmetric contraction semigroup
generated by the self-adjoint operator $(L_{C},\mathsf{Dom}_{L_{C}})$.

\begin{proposition}
\label{Isotropic semigroup}The semigroup $\{P_{t}\}$ has the following
representation%
\[
P_{t}f(x)=\int\limits_{0}^{\infty}P_{B_{r}(x)}f\text{ }d\sigma
^{t}(r),\text{ }f\in L^{2},
\]
where $\sigma^{t}(r)=\exp(-t/r)$ and $B_{r}(x)$ is the $d_{\ast}$-ball of
radius $r$ centered at $x.$

In particular, $\{P_{t}\}$ is an isotropic Markov semigroup on the ultrametric
measure space $(X,d_{\ast},m)$ as defined and studied in \cite{BGPW}.
\end{proposition}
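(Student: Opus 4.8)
The plan is to verify the claimed integral formula on the complete system of eigenfunctions $\{f_{B,B^{\prime}}\}$ from Proposition~\ref{Spectral properties} and then to extend the identity to all of $L^{2}$ by a density argument. First I would observe that the right-hand side defines a contraction on $L^{2}$: for each fixed $r$ the map $x\mapsto P_{B_{r}(x)}f$ is constant on every $d_{\ast}$-ball of radius $r$ (by property (b)) and equals the $m$-average of $f$ over that ball, so it is precisely the conditional expectation of $f$ with respect to the partition of $X$ into $d_{\ast}$-balls of radius $r$, hence a contraction on $L^{2}$. Since $\sigma^{t}$ is a probability measure on $(0,\infty)$ — indeed $\sigma^{t}(0^{+})=\lim_{r\to 0^{+}}\exp(-t/r)=0$ and $\sigma^{t}(\infty)=1$ — Minkowski's integral inequality shows that $f\mapsto\int_{0}^{\infty}P_{B_{r}(\cdot)}f\,d\sigma^{t}(r)$ is a contraction. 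As $P_{t}=\exp(-tL_{C})$ is also bounded and $\{f_{B,B^{\prime}}\}$ (together with the constant function when $m(X)<\infty$) is complete, it suffices to check the identity on each $f_{B,B^{\prime}}$.

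The core computation is to determine, for fixed $x$, the step function $r\mapsto P_{B_{r}(x)}f_{B,B^{\prime}}$. The decisive facts are that $f_{B,B^{\prime}}$ is supported in $B^{\prime}$, is locally constant, and has zero mean, $\int_{B^{\prime}}f_{B,B^{\prime}}\,dm=0$ (since $\int_{B^{\prime}}\mathbf{1}_{B}/m(B)\,dm=\int_{B^{\prime}}\mathbf{1}_{B^{\prime}}/m(B^{\prime})\,dm=1$). Consequently, for any $d_{\ast}$-ball $A=B_{r}(x)$: if $A\supseteq B^{\prime}$, then $P_{A}f_{B,B^{\prime}}=\frac{1}{m(A)}\int_{B^{\prime}}f_{B,B^{\prime}}\,dm=0$; while if $A$ does not contain $B^{\prime}$ (so either $A\subsetneq B^{\prime}$ or $A\cap B^{\prime}=\varnothing$, by the nested-or-disjoint property (c)), then $f_{B,B^{\prime}}$ is constant on $A$ with value $f_{B,B^{\prime}}(x)$, whence $P_{A}f_{B,B^{\prime}}=f_{B,B^{\prime}}(x)$. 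Since the balls $B_{r}(x)$ increase with $r$ through the chain of balls containing $x$, and $\diam_{\ast}(B^{\prime})=1/\lambda(B^{\prime})$, the transition between these two regimes occurs exactly at $r=1/\lambda(B^{\prime})$. Thus $r\mapsto P_{B_{r}(x)}f_{B,B^{\prime}}$ equals $f_{B,B^{\prime}}(x)$ for $r\le 1/\lambda(B^{\prime})$ and $0$ for $r>1/\lambda(B^{\prime})$; the three positional cases $x\in B$, $x\in B^{\prime}\setminus B$, $x\notin B^{\prime}$ all collapse into this single clean statement.

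Integrating against $d\sigma^{t}$, and using that $\sigma^{t}$ is nonatomic (so the single boundary value at $r=1/\lambda(B^{\prime})$ is irrelevant), I obtain $\int_{0}^{\infty}P_{B_{r}(x)}f_{B,B^{\prime}}\,d\sigma^{t}(r)=f_{B,B^{\prime}}(x)\,\sigma^{t}(1/\lambda(B^{\prime}))=f_{B,B^{\prime}}(x)\,e^{-t\lambda(B^{\prime})}$, which is exactly $(P_{t}f_{B,B^{\prime}})(x)=e^{-t\lambda(B^{\prime})}f_{B,B^{\prime}}(x)$ by the eigenvalue relation of Proposition~\ref{Spectral properties}. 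This establishes the identity on the complete system, hence on all of $L^{2}$ by boundedness. For the ``in particular'' assertion I would simply read off the representation: each $P_{B_{r}(x)}$ is a positivity-preserving average that fixes constants, so $P_{t}$ is Markov, and since the relation $y\in B_{r}(x)$ depends only on whether $d_{\ast}(x,y)<r$, the resulting kernel depends on $(x,y)$ only through $d_{\ast}(x,y)$ and the measures of balls, which is exactly the isotropy built into the definition of \cite{BGPW}.

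The main obstacle is the core computation in the second paragraph: correctly tracking how the average $P_{B_{r}(x)}f_{B,B^{\prime}}$ evolves with $r$ across the cases $x\in B$, $x\in B^{\prime}\setminus B$, $x\notin B^{\prime}$, and pinning the jump precisely to $r=1/\lambda(B^{\prime})=\diam_{\ast}(B^{\prime})$. The two observations that make this transparent — that $f_{B,B^{\prime}}$ has zero mean on $B^{\prime}$ and is locally constant below $B^{\prime}$ — are what let the three cases merge into one step function and convert the integral directly into the factor $e^{-t\lambda(B^{\prime})}$.
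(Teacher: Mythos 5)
Your proof is correct, and it takes a genuinely different computational route from the paper's, though both ultimately verify the formula on a total family and extend by density. The paper tests the representation on the normalized indicators $f_{T}=\mathbf{1}_{T}/m(T)$: it expands $f_{T}=\sum_{S:\,T\subseteq S}f_{S,S^{\prime}}$ via (\ref{f_series}), applies $P_{t}$ termwise using the eigenvalue relation, regroups the sum through the case analysis for $P_{S}(f_{T})$, and then performs an Abel transformation to reconstruct the Stieltjes integral $\int_{0}^{\infty}P_{B_{r}(x)}f\,d\sigma^{t}(r)$. You instead test the identity directly on each eigenfunction $f_{B,B^{\prime}}$, where the radial profile $r\mapsto P_{B_{r}(x)}f_{B,B^{\prime}}$ collapses to a single step function --- equal to $f_{B,B^{\prime}}(x)$ for $r\le 1/\lambda(B^{\prime})=\diam_{\ast}(B^{\prime})$ and to $0$ beyond --- by exactly the two structural facts you isolate (zero mean on $B^{\prime}$, local constancy strictly below $B^{\prime}$, with property (c) controlling the position of $B_{r}(x)$ relative to $B^{\prime}$); the integral then evaluates in one line to $e^{-t\lambda(B^{\prime})}f_{B,B^{\prime}}(x)$, and the Abel summation is avoided entirely. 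Your version has the additional merit of explicitly checking that the right-hand side is an $L^{2}$-contraction (each $f\mapsto P_{B_{r}(\cdot)}f$ is the conditional expectation onto the countable partition of $X$ into $d_{\ast}$-balls of radius $r$, by properties (b) and (f), plus Minkowski's integral inequality), a point that the paper's closing density step needs but leaves implicit; what the paper's choice of test family buys in exchange is an explicit formula for $P_{t}f_{T}$ on indicators, which is the form from which one reads off the isotropic Markov kernel in the sense of \cite{BGPW}, whereas your reading of the ``in particular'' clause (positivity, preservation of constants, kernel depending only on $d_{\ast}(x,y)$) is an adequate substitute. In a final write-up you should add one sentence pinning the jump exactly at $r=\diam_{\ast}(B^{\prime})$: attainment of the diameter (there is $y\in B^{\prime}$ with $\{x\}\wedge\{y\}=B^{\prime}$; pick $y$ in an immediate successor of $B^{\prime}$ other than the one containing $x$) shows $B_{r}(x)\not\supseteq B^{\prime}$ for $r\le 1/\lambda(B^{\prime})$, and the strict inequality $\lambda(T)<\lambda(B^{\prime})$ for $T\varsupsetneq B^{\prime}$ (positivity of $C$) shows $B_{r}(x)\varsubsetneq B^{\prime}$ there; you already correctly note that the boundary value at the jump is immaterial since $\sigma^{t}$ is nonatomic.
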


\begin{proof}
We choose $f=f_{B}$ and compute $P_{t}f(x).$ Using the identity
(\ref{f_series}) we obtain%
\begin{align*}
P_{t}f(x)  &  =\sum\limits_{T:\text{ }B\subseteq T}P_{t}f_{T,T^{\prime}%
}(x)=\sum\limits_{T:\text{ }B\subseteq T}e^{-t\lambda(T^{\prime}%
)}f_{T,T^{\prime}}(x)\\
&  =\sum\limits_{T:\text{ }B\subseteq T,x\in T^{\prime}}e^{-t\lambda
(T^{\prime})}\left(  f_{T}(x)-f_{T^{\prime}}(x)\right) \\
&  =\sum\limits_{T:\text{ }B\wedge\{x\}\subseteq T}e^{-t\lambda(T^{\prime}%
)}\left(  f_{T}(x)-f_{T^{\prime}}(x)\right)  -e^{-t\lambda(B\wedge
\{x\})}f_{B\wedge\{x\}}(x).
\end{align*}
Next observe that for any ball $T$ centered at $x,$%
\[
P_{T}\left(  f_{B}\right)  =\left\{
\begin{array}
[c]{ccc}%
f_{B}(x) & \text{if} & T\subseteq B\\
f_{T}(x) & \text{if} & B\subseteq T\\
0 & \text{if} & B\cap T=\varnothing
\end{array}
\right.  .
\]
With this observation in mind we write the equality from above as%
\[
P_{t}f(x)=\sum\limits_{T:\text{ }x\in T}e^{-t\lambda(T^{\prime})}\left(
P_{T}\left(  f_{B}\right)  (x)-P_{T^{\prime}}\left(  f_{B}\right)  (x)\right)
.
\]
Applying the Abel transformation and the definition (\ref{d-modified}) of the
modified ultrametric $d_{\ast}$ we get the desired equality with $f=f_{B}.$
The set spanned by the functions $f_{B}$ is dense in $L^{2},$ the result follows.
\end{proof}

\paragraph{ $L^{p}$-Spectrum of the hierarchical Laplacian}

Consider the semigroup $P_{t}=\exp(-tL_{C}).$ As $\{P_{t}\}$ is symmetric and
Markovian, it admits an extension to $L^{q},$ $1\leq q<\infty,$ as a
continuous contraction semigroup, call it $\{P_{t}^{q}\}$,%
\[
P_{t}^{q}f(x)=\int\limits_{0}^{\infty}P_{B_{r}(x)}f\text{ }d\sigma
^{t}(r),\text{ }f\in L^{q}.
\]
Let $(-\mathcal{L},\mathsf{Dom}_{\mathcal{L}})$ be the infinitesimal generator
of the semigroup $\{P_{t}^{q}\}$ . Since the operator $(L_{C},\mathcal{D)}$
acts in $L^{q}$ and $\{P_{t}^{q}\}$ extends $\{P_{t}\}$, the operator
$\mathcal{L}$ defines a closed extension of $L_{C}$, call it $L_{C}^{q}.$
Applying Theorem 7.8 in \cite{BGPW} we obtain

\begin{proposition}
\label{p-Spectrum} For any $1\leq q<\infty,$ the operator $(L_{C}%
,\mathcal{D)}$ acting in $L^{q}$ admits a closed extension $L_{C}^{q}.$ The
closed operator $-L_{C}^{q}$ coincides with the infinitesimal generator of a
Markov semigroup acting in $L^{q}$. Moreover, for all $1\leq q<\infty,$
\[
\mathsf{Spec}(L_{C}^{q})   =\mathsf{Spec}(L_{C}^{2})
  =\overline{\{\lambda(B):B\in\mathcal{B}\}}\cup\{0\}.
\]
\end{proposition}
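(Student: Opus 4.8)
The plan is to treat the three assertions separately, since the first two are essentially already in place and only the spectral identity carries genuine content. For the existence of the closed extension $L_C^q$ and the identification of $-L_C^q$ as a Markov generator, I would simply appeal to the construction preceding the statement: the semigroup $\{P_t\}$ is symmetric and Markovian, so it extends to a strongly continuous contraction semigroup $\{P_t^q\}$ on every $L^q$, $1\le q<\infty$; by Lemma~\ref{L-C-domain} the operator $(L_C,\mathcal{D})$ already acts in $L^q$ and is a restriction of the generator $-\mathcal{L}$ of $\{P_t^q\}$, so $L_C^q:=\mathcal{L}$ is the desired closed extension and $-L_C^q$ is, by construction, the generator of the Markov semigroup $\{P_t^q\}$.

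First I would settle the case $q=2$ directly. By Proposition~\ref{Spectral properties} the self-adjoint operator $(L_C,\mathsf{Dom}_{L_C})$ carries the complete orthogonal eigensystem $\{f_{B,B'}\}_{B'\in\mathcal{B}}$ with eigenvalues $\lambda(B')$; for a self-adjoint operator possessing a complete system of eigenvectors the spectrum is precisely the closure of the eigenvalue set, so $\mathsf{Spec}(L_C^2)=\overline{\{\lambda(B):B\in\mathcal{B}\}}\cup\{0\}$. The only point requiring care is that $0$ always belongs to the spectrum. If $m(X)<\infty$ this holds because the constant $1/m(X)$ enters the eigensystem as the term $f_{X,X'}$ with eigenvalue $\lambda(X')=0$. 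If $m(X)=\infty$, then $X$ is non-compact and I would fix a ball $B_0$; non-compactness forces the chain of balls containing it to be infinite, $B_0\varsubsetneq B_1\varsubsetneq\dots$, and since $\lambda(B_0)=\sum_{T\supseteq B_0}C(T)<\infty$ by Definition~\ref{choice function}, the tails $\lambda(B_k)=\sum_{T\supseteq B_k}C(T)$ tend to $0$, so that $0$ is already an accumulation point of $\{\lambda(B)\}$.

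Next I would transfer the computation to arbitrary $q$, where the spectral theorem is no longer available. The strategy is to reduce the $q$-independence of the spectrum to the general theory of isotropic Markov semigroups on ultrametric spaces. Its hypotheses are met: by Proposition~\ref{Isotropic semigroup} the semigroup $\{P_t\}$ is an isotropic Markov semigroup on the ultrametric measure space $(X,d_{\ast},m)$, with $\lambda(B)=1/\diam_{\ast}(B)$. Invoking Theorem~7.8 of~\cite{BGPW} then yields $\mathsf{Spec}(L_C^q)=\mathsf{Spec}(L_C^2)$ for all $1\le q<\infty$, which together with the previous paragraph gives the stated formula.

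I expect this last step, the independence of the spectrum on $q$, to be the main obstacle, and it is exactly what~\cite{BGPW} supplies. The mechanism behind it is the consistency of the resolvents $(L_C^q-z)^{-1}$ across different $q$: they are built from the single explicit kernel $\int_0^\infty P_{B_r(x)}f\,d\sigma^t(r)$ of Proposition~\ref{Isotropic semigroup} and therefore agree on the common dense subspace $L^1\cap L^\infty$. This compatibility, together with the spectral mapping for the holomorphic contraction semigroup, forces the resolvent sets, and hence the spectra, to coincide for all $q$. Establishing it from scratch would amount to reproving the relevant part of~\cite{BGPW}; granting that reference, the application is immediate.
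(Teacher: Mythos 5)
Your proposal is correct and takes essentially the same route as the paper: the paper's entire argument is the paragraph preceding the statement (the symmetric Markovian semigroup $\{P_{t}\}$ extends to a contraction semigroup $\{P_{t}^{q}\}$ on $L^{q}$, whose generator, by Lemma~\ref{L-C-domain}, yields the closed extension $L_{C}^{q}$) followed by an appeal to Theorem~7.8 of~\cite{BGPW}, exactly as you do. Your additional explicit treatment of the case $q=2$ via Proposition~\ref{Spectral properties}, including the check that $0$ always belongs to the spectrum (constant eigenfunction when $m(X)<\infty$; vanishing tails $\lambda(B_{k})=\sum_{T\supseteq B_{k}}C(T)\to 0$ along an infinite increasing chain when $m(X)=\infty$), is a sound elaboration of what the paper leaves implicit in the citation.
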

\section{$p$-Adic Fractional Derivative}

\setcounter{equation}{0}Consider the field $\mathbb{Q}_{p}$ of $p$-adic
numbers endowed with the $p$-adic norm $\left\Vert x\right\Vert _{p}$ and the
$p$-adic ultrametric $d(x,y)=$ $\left\Vert x-y\right\Vert _{p}.$ Let $m$ be
the normalized Haar measure on $\mathbb{Q}_{p}$, that is, $m(\mathbb{Z}%
_{p})=1$, where $\mathbb{Z}_{p}$ is the set of $p$-adic integers.

In the ultrametric space $(\mathbb{Q}_{p},d)$ all $d$-balls are either compact
subgroups $p^{k}\mathbb{Z}_{p}$ or their cosets $p^{k}\mathbb{Z}_{p}+a$;
$\diam(p^{k}\mathbb{Z}_{p}+a)=p^{-k}$ and $m(p^{k}\mathbb{Z}_{p}+a)=p^{-k}$.
The balls form a regular tree $\mathcal{T}_{p}(X)$ of forward degree $p$
without the least element and without end-points.

The notion of $p$-adic fractional derivative related to the concept of
$p$-adic Quantum Mechanics was introduced in several mathematical papers
Vladimirov \cite{Vladimirov}, Vladimirov and Volovich
\cite{VladimirovVolovich}, Vladimirov, Volovich and Zelenov
\cite{Vladimirov94}. In particular, in \cite{Vladimirov} a one-parametric
family $\{(\mathfrak{D}^{\alpha},\mathcal{D})\}_{\alpha>0}$ of operators
(called operators of fractional derivative of order $\alpha$) have been
introduced. Recall that $\mathcal{D}$ is the set of all locally constant
functions having compact support.

The operators $\mathfrak{D}^{\alpha}$ were defined via Fourier transform
available on locally compact Abelian group $\mathbb{Q}_{p}$,
\begin{equation}
\widetilde{\mathfrak{D}^{\alpha}u}(\xi)=\left\Vert \xi\right\Vert _{p}%
^{\alpha}\widetilde{u}(\xi). \label{fracderiv}%
\end{equation}
Moreover, it was shown that each operator $\mathfrak{D}^{\alpha}$ can be
written as a Riemann-Liouville type singular integral operator%
\[
\mathfrak{D}^{\alpha}u(x)=\frac{p^{\alpha}-1}{1-p^{-\alpha-1}}\int
\limits_{\mathbb{Q}_{p}}\frac{u(x)-u(y)}{\left\Vert x-y\right\Vert
_{p}^{1+\alpha}}\text{ }dm(y).
\]
The aim of this section is to illustrate the results of Section 3 showing that
the operator $(\mathfrak{D}^{\alpha},\mathcal{D})$ is in fact a hierarchical
Laplacian. More precisely, we claim that $(\mathfrak{D}^{\alpha},\mathcal{D})$
is a hierarchical Laplacian corresponding to the choice function%
\begin{equation}
C(B)=(1-p^{-\alpha})\diam(B)^{-\alpha}, \label{C-alpha}%
\end{equation}
or equivalently, the eigenvalues $\lambda(B)$ are of the form%
\[
\lambda(B)=\diam(B)^{-\alpha}.
\]
In particular, the set $\mathsf{Spec}(\mathfrak{D}^{\alpha})$ consists of
eigenvalues $p^{k\alpha},k\in\mathbb{Z},$ each of which has infinite
multiplicity and contains $0$ as an accumulation point.

To prove the claim observe that the Fourier transform $\mathcal{F}%
:f\mapsto\widetilde{f}$ on the locally compact Abelian group $\mathbb{Q}_{p}$
is a linear isomorphism from $\mathcal{D}$ onto itself. This basic fact and
(\ref{fracderiv}) imply that $(\mathfrak{D}^{\alpha},\mathcal{D})$ is an
essentially self-adjoint and non-negative definite operator in $L^{2}%
=L^{2}(\mathbb{Q}_{p},m).$ Next we claim that the spectrum of the symmetric
operator $(\mathfrak{D}^{\alpha},\mathcal{D})$ coincides with the range of the
function $\xi\mapsto\left\Vert \xi\right\Vert _{p}^{\alpha},$ that is,
\[
\mathsf{Spec}(\mathfrak{D}^{\alpha})=\{p^{k\alpha}:k\in\mathbb{Z}\}\cup\{0\};
\]
the eigenspace $\mathcal{H(\lambda)}$ corresponding to the eigenvalue
$\lambda=p^{k\alpha},$ is spanned by the function
\[
f_{k}=\frac{1}{m(p^{k}\mathbb{Z}_{p})}\mathbf{1}_{p^{k}\mathbb{Z}_{p}}%
-\frac{1}{m(p^{k-1}\mathbb{Z}_{p})}\mathbf{1}_{p^{k-1}\mathbb{Z}_{p}}%
\]
and\ all its\ shifts\ $f_{k}(\cdot+a)$ with $a\in\mathbb{Q}_{p}/p^{k}%
\mathbb{Z}_{p}$.

Indeed, the ball $B_{s}(0)$, $p^{l}\leq s<p^{l+1}$, is the compact subgroup
$p^{-l}\mathbb{Z}_{p}$ of $\mathbb{Q}_{p}$, whence the measure $\omega
_{s}=\mathbf{1}_{B_{s}(0)}m/m(B_{s}(0))$ coincides with the normed Haar
measure of that compact
subgroup. Since for any locally compact Abelian group, the Fourier transform
of the normed Haar measure of any compact subgroup is the indicator of its
annihilator group and, in our particular case, the annihilator of the group
$p^{-l}\mathbb{Z}_{p}$ is the group $p^{l}\mathbb{Z}_{p}\,$, (see \cite{Feldman}), we obtain
\[
\widetilde{\omega_{s}}(\xi)=\mathbf{1}_{p^{l}\mathbb{Z}_{p}}(\xi
)=\mathbf{1}_{\{\left\Vert \xi\right\Vert _{p}\leq p^{-l}\}},\quad
\text{where\ \ }p^{l}\leq s<p^{l+1}.
\]
Computing now the Fourier transform of the function $f_{k},$%
\[
\widetilde{f_{k}}(\xi)=\mathbf{1}_{\{\left\Vert \xi\right\Vert _{p}\leq
p^{k}\}}-\mathbf{1}_{\{\left\Vert \xi\right\Vert _{p}\leq p^{k-1}%
\}}=\mathbf{1}_{\{\left\Vert \xi\right\Vert _{p}=p^{k}\}},
\]
we get%
\[
\widetilde{\mathfrak{D}^{\alpha}f_{k}}(\xi)=\left\Vert \xi\right\Vert
_{p}^{\alpha}\widetilde{f_{k}}(\xi)=p^{k\alpha}\widetilde{f_{k}}(\xi),
\]
as desired.

Finally, we apply Proposition \ref{Spectral properties} to conclude that the
essentially self-adjoint operator $(\mathfrak{D}^{\alpha},\mathcal{D})$
coincides with the hierarchical Laplacian $(L_{C},\mathcal{D)}$ with $C(B)$
given by the equation (\ref{C-alpha}).

At last, applying Proposition \ref{p-Spectrum} we obtain the following result.

\begin{proposition}
For any $1\leq q<\infty,$ the operator $\mathfrak{D}^{\alpha}$ admits a closed
extension $\mathfrak{D}_{q}^{\alpha}$ in $L^{q}.$ The closed operator
$-\mathfrak{D}_{q}^{\alpha}$ coincides with the infinitesimal generator of a
translation invariant Markov semigroup acting in $L^{q}.$ Moreover, for all
$1\leq q<\infty,$
\[
\mathsf{Spec}(\mathfrak{D}_{q}^{\alpha})=\mathsf{Spec}(\mathfrak{D}%
_{2}^{\alpha}).
\]
\end{proposition}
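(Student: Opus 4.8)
The plan is to reduce the whole statement to Proposition~\ref{p-Spectrum}. In the discussion preceding the statement, $(\mathfrak{D}^{\alpha},\mathcal{D})$ has already been identified with the hierarchical Laplacian $(L_{C},\mathcal{D})$ determined by the choice function $C(B)=(1-p^{-\alpha})\diam(B)^{-\alpha}$, for which $\lambda(B)=\diam(B)^{-\alpha}$. Applying Proposition~\ref{p-Spectrum} verbatim then produces, for each $1\leq q<\infty$, a closed extension $\mathfrak{D}_{q}^{\alpha}:=L_{C}^{q}$ of $(\mathfrak{D}^{\alpha},\mathcal{D})$ in $L^{q}$ whose negative generates a Markov semigroup $\{P_{t}^{q}\}$, together with the $q$-independence of the spectrum,
\[
\mathsf{Spec}(\mathfrak{D}_{q}^{\alpha})=\mathsf{Spec}(\mathfrak{D}_{2}^{\alpha})=\overline{\{\lambda(B):B\in\mathcal{B}\}}\cup\{0\}=\{p^{k\alpha}:k\in\mathbb{Z}\}\cup\{0\},
\]
the last equality holding because $\diam(p^{k}\mathbb{Z}_{p}+a)=p^{-k}$.

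The only assertion not already contained in Proposition~\ref{p-Spectrum} is that the semigroup $\{P_{t}^{q}\}$ is \emph{translation invariant}, and this is where I would do the extra work. I would establish it first on $L^{2}$ and then propagate it to $L^{q}$ by density. On $L^{2}$ the operator $\mathfrak{D}^{\alpha}$ is, by~\eqref{fracderiv}, the Fourier multiplier with symbol $\xi\mapsto\Vert\xi\Vert_{p}^{\alpha}$; since the Fourier transform on $\mathbb{Q}_{p}$ turns the translation $\tau_{a}:f\mapsto f(\cdot+a)$ into multiplication by a unimodular character, $\mathfrak{D}^{\alpha}$ commutes with every $\tau_{a}$, $a\in\mathbb{Q}_{p}$. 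Hence its self-adjoint functional calculus commutes with $\tau_{a}$ as well, so $\tau_{a}P_{t}=P_{t}\tau_{a}$ on $L^{2}$ for all $a$ and all $t\geq0$.

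To pass to $L^{q}$ I would use that each $\tau_{a}$ is an isometric bijection of $L^{q}$ mapping $\mathcal{D}$ onto itself, and that $\mathcal{D}\subset L^{1}\cap L^{2}\cap L^{\infty}$ is dense in every $L^{q}$, $1\leq q<\infty$. On this common core the identity $\tau_{a}P_{t}^{q}=P_{t}^{q}\tau_{a}$ reduces to the one already verified on $L^{2}$; since $P_{t}^{q}$ is the continuous extension of $P_{t}$ and $\tau_{a}$ is continuous on $L^{q}$, the identity extends by density to all of $L^{q}$. Thus $\{P_{t}^{q}\}$ is a translation-invariant Markov semigroup with generator $-\mathfrak{D}_{q}^{\alpha}$, which finishes the argument. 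I do not expect a genuine obstacle here: the bulk is a direct citation of Proposition~\ref{p-Spectrum}, and the only point deserving care is checking that translation invariance, valid on the dense core, indeed extends to the closed operator and its semigroup on $L^{q}$.
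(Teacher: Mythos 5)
Your proposal is correct and follows essentially the same route as the paper, which obtains the proposition directly by applying Proposition~\ref{p-Spectrum} to the already-established identification $\mathfrak{D}^{\alpha}=L_{C}$ with $C(B)=(1-p^{-\alpha})\diam(B)^{-\alpha}$. The only difference is that you explicitly verify the translation invariance of $\{P_{t}^{q}\}$ (via the Fourier-multiplier commutation on $L^{2}$ and density of $\mathcal{D}$ in $L^{q}$), a routine point the paper leaves implicit, and your verification is sound since $\tau_{a}$ preserves the core $\mathcal{D}$ and $P_{t}^{q}$ agrees with $P_{t}$ there.
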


In the general setting of Propositions \ref{Spectral properties} and
\ref{p-Spectrum}, some eigenvalues may well have finite multiplicity and some
not. Indeed, attached to each ball $B$ of $d_{\ast}$-diameter $1/\lambda$
there are the eigenvalue $\lambda$ and the corresponding finite dimensional
eigenspace $\mathcal{H}_{B}.$ This eigenspace is spanned by the finitely many
functions
\[
f_{C,B}=\frac{1}{m(C)}\mathbf{1}_{C}-\frac{1}{m(B)}\mathbf{1}_{B}\,,
\]
where $C$ runs through all balls whose predecessor is $C^{\prime}=B$. Recall
that $\dim\mathcal{H}_{B}=l(B)-1$, where $l(B)=\sharp\{C\in\mathcal{B}%
:C^{\prime}=B\}$.

It follows that in general, if there exists only a finite number of distinct
balls of $d_{\ast}$-diameter $1/\lambda$ then the eigenvalue $\lambda$ has
finite multiplicity.

This is clearly not the case for the ultrametric measure space $(\mathbb{Q}%
_{p},d,m)\,$ and the operator $\mathfrak{D}^{\alpha}.$ Indeed, every $d_{\ast
}$-ball has its diameter in the set $\Lambda_{\alpha}=\{p^{k\alpha}%
:k\in\mathbb{Z\}}$, and each ball $B_{1/\lambda}(0)$ centered at the neutral
element $0$ and of diameter $1/\lambda$ has infinitely many disjoint
translates $\{a_{i}+B_{1/\lambda}(0)=B_{1/\lambda}(a_{i}),$ $i=1,2,...$, which cover
$\mathbb{Q}_{p}$ and are balls of the same diameter. Thus, all eigenvalues
have infinite multiplicity.

\begin{remark}
Let $\mathcal{H}(\lambda)$ be the eigenspace corresponding to the eigenvalue
$\lambda\in\Lambda_{\alpha}\,$. Then
\begin{equation}
L^{2}=\bigoplus_{\lambda\in{\Lambda}_{\alpha}}\mathcal{H}(\lambda
)\quad\text{and}\quad\mathcal{H}(\lambda)=\bigoplus_{i=1}^{\infty}%
\mathcal{H}_{a_{i}+B_{1/\lambda}(0)}. \label{L-2-decomposition}%
\end{equation}
We choose for each closed ball $B\subset\mathbb{Q}_{p}$ an orthonormal basis
$\{e_{i}^{B}:1\leq i\leq p-1\}$ in $\mathcal{H}_{B}.$ In view of
(\ref{L-2-decomposition}), the set of eigenfunctions $\{e_{i}^{B}%
:B\in\mathcal{B},1\leq i\leq p-1\}$ is an orthonormal basis in $L^{2}.$ (This
reasoning applies to arbitrary ultrametric spaces.) Whether this set is a
Schauder basis in $L^{q}$, $1\leq q<\infty,$ is an open question.
\end{remark}

\paragraph{Random perturbations}

Let $\mathfrak{D}$ be the operator of fractional derivative of order
$\alpha=1$ acting on the ultrametric measure space $(\mathbb{Q}_{p},d,m).$ For
simplicity we assume that $p=2.$ Let $\{\varepsilon(B):B\in\mathcal{B}\}$ be
i.i.d. Bernoulli random variables defined on a probability space $(\Omega,P).$
We define a perturbation $\mathfrak{D}(\omega),$ $\omega\in\Omega,$ of the
operator $\mathfrak{D}$ as follows%
\[
\mathfrak{D}(\omega)f(x)=-\sum\limits_{B\in\mathcal{B}:\text{ }x\in
B}C(B,\omega)\left(  P_{B}f-f(x)\right)  ,\text{ }f\in\mathcal{D},
\]
where the perturbated choice function $C(B,\omega)=C(B)(1+\delta
\varepsilon(B))$ with $C(B)$ given at (\ref{C-alpha}), $\alpha=1.$ Evidently
the operator $\mathfrak{D}(\omega)$ is a hierarchical Laplacian, for each
$\omega\in\Omega.$

\begin{proposition}\label{Spectrum at random}
In the notation introduced above,%
\[
\mathsf{Spec}(\mathfrak{D}(\omega))=\{0\}\cup\left\{  \cup_{l\in\mathbb{Z}%
}[2^{-l},2^{-l}(1+\delta)]\right\}  \text{ },\quad P-a.s.
\]
In particular, when $0<\delta<1,$ the set $\mathsf{Spec}(\mathfrak{D}%
(\omega))$ consists of disjoint intervals and $\{0\}$ whereas, when
$\delta\geq1,$ $\mathsf{Spec}(\mathfrak{D}(\omega))=\mathbb{R}_{+},$ $P$- a.s. .
\end{proposition}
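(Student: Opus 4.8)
The plan is to reduce everything to the spectral description already obtained in Proposition~\ref{p-Spectrum}. Since for every fixed $\omega$ the operator $\mathfrak{D}(\omega)$ is the hierarchical Laplacian attached to the choice function $C(\cdot,\omega)$, that proposition gives
\[
\mathsf{Spec}(\mathfrak{D}(\omega))=\overline{\{\lambda(B,\omega):B\in\mathcal{B}\}}\cup\{0\},
\]
so the whole task becomes the identification of the closure of the random set of eigenvalues. First I would compute these eigenvalues explicitly. Fix a ball $B$ with $\diam(B)=2^{-k}$ and let $B=T_{0}\subset T_{1}\subset T_{2}\subset\cdots$ be the chain of its predecessors, so that $\diam(T_{j})=2^{j-k}$ and $C(T_{j})=\tfrac12\diam(T_{j})^{-1}=2^{k-j-1}$. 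Summing the perturbed choice function along this chain yields
\[
\lambda(B,\omega)=\sum_{j\ge 0}2^{k-j-1}\bigl(1+\delta\varepsilon(T_{j})\bigr)=2^{k}\Bigl(1+\delta\,\eta_{B}\Bigr),\qquad \eta_{B}:=\sum_{j\ge 0}2^{-j-1}\varepsilon(T_{j}).
\]

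Because $\varepsilon(T_{j})\in\{0,1\}$, the number $\eta_{B}$ is exactly a binary expansion and lies in $[0,1]$, whence $\lambda(B,\omega)\in[2^{k},2^{k}(1+\delta)]$. Writing $l=-k$ this is the interval $[2^{-l},2^{-l}(1+\delta)]$, which gives at once the inclusion
\[
\{\lambda(B,\omega):B\in\mathcal{B}\}\subseteq\bigcup_{l\in\mathbb{Z}}[2^{-l},2^{-l}(1+\delta)].
\]

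The substance of the proof is the reverse inclusion, namely that $P$-almost surely, and for every fixed $k$, the set $\{\eta_{B}:\diam(B)=2^{-k}\}$ is dense in $[0,1]$. This is where the main obstacle lies: balls of the same diameter share all but finitely many of their ancestors, so their values $\eta_{B}$ are far from independent and one cannot naively invoke an i.i.d.\ sampling of $[0,1]$. To circumvent this I would, for a target dyadic value $t\in[0,1]$ with prescribed first $N$ binary digits and for $2^{-N}<\epsilon$, select inside the tree $\mathcal{T}_{p}(X)$ (of forward degree $2$) an infinite family of balls $B^{(1)},B^{(2)},\dots$ of diameter $2^{-k}$ whose truncated ancestral chains $T^{(i)}_{0},\dots,T^{(i)}_{N-1}$ are pairwise disjoint; this is possible precisely because $\mathbb{Q}_{2}$ has no least ball, so one repeatedly passes to a sibling subtree higher up and descends again to level $k$. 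For such a family the digit blocks $(\varepsilon(T^{(i)}_{0}),\dots,\varepsilon(T^{(i)}_{N-1}))$ are genuinely i.i.d., each matching the first $N$ digits of $t$ with probability $2^{-N}>0$, so the second Borel--Cantelli lemma guarantees that almost surely some $B^{(i)}$ realizes those digits, whence $|\eta_{B^{(i)}}-t|\le 2^{-N}<\epsilon$. Intersecting these almost sure events over a countable dense set of targets $t$, over rational $\epsilon$, and over all $k\in\mathbb{Z}$ produces density on a single event of full probability.

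Combining the two inclusions and passing to closures then gives $\mathsf{Spec}(\mathfrak{D}(\omega))=\{0\}\cup\bigcup_{l\in\mathbb{Z}}[2^{-l},2^{-l}(1+\delta)]$ almost surely, since the closure of a set dense in each $[2^{-l},2^{-l}(1+\delta)]$ is the union of these closed intervals together with their only finite accumulation point $0$. Finally the two regimes follow by elementary interval arithmetic: the interval at level $l-1$ begins at $2^{-(l-1)}=2\cdot 2^{-l}$, so consecutive intervals are disjoint exactly when $2^{-l}(1+\delta)<2\cdot 2^{-l}$, i.e.\ $\delta<1$, while for $\delta\ge 1$ each interval reaches the left endpoint of the next one and the union over $l\in\mathbb{Z}$ fills all of $(0,\infty)$, giving $\mathsf{Spec}(\mathfrak{D}(\omega))=\mathbb{R}_{+}$.
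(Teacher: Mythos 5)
Your proposal is correct and takes essentially the same route as the paper: both reduce to $\mathsf{Spec}(\mathfrak{D}(\omega))=\overline{\{\lambda(B,\omega)\}}\cup\{0\}$ via Proposition~\ref{p-Spectrum}, write $\lambda(B,\omega)=2^{-l}\bigl(1+\delta U(B,\omega)\bigr)$ with $U$ a random binary expansion, and obtain a.s.\ density of $\{U(B):B\in\mathcal{B}_{l}\}$ in $[0,1]$ by picking infinitely many same-level balls with disjoint initial ancestor chains so that the digit blocks are i.i.d.\ (your second Borel--Cantelli application is exactly the paper's ``probability zero that all $U(B)$ lie in a fixed dyadic interval,'' passed to the complement). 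The only nitpick: the matching probability equals $2^{-N}$ only for symmetric Bernoulli variables; for a general parameter $p\in(0,1)$ it is at least $\min(p,1-p)^{N}>0$, which is all the argument needs.
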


\begin{proof}
Let $\mathcal{B}_{l}\subset\mathcal{B}$ be the family of all balls of diameter
$2^{l}.$ For $B\in\mathcal{B}_{l}$ and $\omega\in\Omega,$ let us compute the
eigenvalue $\lambda(B,\omega)$ of the operator $\mathfrak{D(\omega),}$%
\begin{align}
\lambda(B,\omega)  &  =\sum\limits_{T\in\mathcal{B}:\text{ }B\subseteq
T}C(T,\omega)=\lambda(B)+\delta\sum\limits_{T\in\mathcal{B}:\text{ }B\subseteq
T}\varepsilon(T,\omega)C(T)\label{lambda at random}\\
&  =\lambda(B)\left(  1+\delta\frac{C(B)}{\lambda(B)}\sum\limits_{T\in
\mathcal{B}:\text{ }B\subseteq T}\frac{C(T)}{C(B)}\varepsilon(T,\omega)\right)
\nonumber\\
&  =2^{-l}\left(  1+\delta U(B,\omega)\right)  ,\nonumber
\end{align}
where%
\[
U(B,\omega)=\sum\limits_{k\geq1}2^{-k}\varepsilon(B_{k},\omega)\quad\text{and
}\quad B=B_{1}\varsubsetneq B_{2}\varsubsetneq...\text{ .}%
\]
Notice that $U(B),$ $B\in\mathcal{B}_{l},$ are (dependent) identically
distributed random variables having values in the interval $[0,1].$

\emph{We claim that}%
\begin{equation}
P\{\omega:U(B,\omega)\in I\quad\text{for all }\quad B\in\mathcal{B}%
_{l}\}=0,\quad\label{claim}%
\end{equation}
$\emph{for}$ \emph{any dyadic interval} $I\varsubsetneq\lbrack0,1].$

Indeed, for any given $B\in\mathcal{B}_{l},$
\[
\{\omega:U(B,\omega)\in I\}=\{\omega:\varepsilon(B_{k},\omega)=\varepsilon
_{k}\quad\text{for all }\quad\text{ }k\leq\kappa\},
\]
for some $\varepsilon_{k}=\varepsilon_{k}(I)\in\{0,1\}$ and $\kappa
=\kappa(I).$ Let $\mathcal{B}_{l}^{\prime}\subset\mathcal{B}_{l}$ be an
infinite collection of balls such that, for each two balls $A$ and $B$ in
$\mathcal{B}_{l}^{\prime}$, the ball $A\wedge B$ belongs to $\mathcal{B}%
_{l+\kappa}.$ Since $\{\varepsilon(B):B\in\mathcal{B}\}$ are i.i.d., we obtain%
\begin{align*}
P\{\omega &  :U(B,\omega)\in I\text{ }\quad\text{for all }\quad B\in
\mathcal{B}_{l}\}\\
&  \leq P\{\omega:U(B,\omega)\in I\text{ }\quad\text{for all }\quad
B\in\mathcal{B}_{l}^{\prime}\}\\
&  =P\{\omega:\varepsilon(B_{k},\omega)=\varepsilon_{k}\text{ }\quad\text{for
all }\quad k\leq\kappa\text{ and }B\in\mathcal{B}_{l}^{\prime}\}\\
&  =\prod\limits_{B\in\mathcal{B}_{l}^{\prime}}P\{\omega:\varepsilon
(B_{k},\omega)=\varepsilon_{k}\text{ }\quad\text{for all }\quad k\leq
\kappa\}=0,
\end{align*}
as claimed.

At last, by (\ref{claim}), for any given dyadic interval $J\subset
\lbrack0,1],$ we have%
\begin{equation}
P\{\omega:U(B,\omega)\in J\text{ }\quad\text{for some }\quad B\in
\mathcal{B}_{l}\}=1.\label{P-U probability}%
\end{equation}
The equations (\ref{P-U probability}) and (\ref{lambda at random}) yield%
\[
\overline{\{\lambda(B,\omega):B\in\mathcal{B}_{l}\}}=[2^{-l},2^{-l}%
(1+\delta)]\quad P-a.s.,
\]
as desired. The proof is finished.
\end{proof}

\begin{figure}\flushright
\begin{tikzpicture}[
level 1/.style={sibling distance=2cm, level distance=1.5cm},
level 2/.style={sibling distance=4cm, level distance=3cm},
level 3/.style={sibling distance=1cm, level distance=0.75cm},
level 4/.style={sibling distance=1cm, level distance=.75cm},
level 5/.style={sibling distance=1cm, level distance=.75cm},
level 6/.style={sibling distance=.4cm, level distance=.3cm},
level 7/.style={sibling distance=1.6cm, level distance=1.2cm},
]
\node {$\infty$}
child [dash pattern=on 0pt off 8pt,line width=2pt,line cap=round] {
	node[draw,circle,solid,inner sep=1.5pt,radius=1pt,fill,line width=1pt,label={0:$B_L$},label={[label distance=6.5cm]0:$\mathcal{B}_L$}] (A1) { }
	child[solid, line width=1pt] {
		node[draw,circle,solid,inner sep=1.5pt,radius=1pt,fill,line width=1pt,label={0:$B_{L-1}$},label={[label distance=8.5cm]0:$\mathcal{B}_{L-1}$}] (B1) { }
		child[solid, line width=1pt] {
			node[draw,circle,solid,inner sep=1.5pt,radius=1pt,fill,line width=1pt] { }
			child [dash pattern=on 0pt off 8pt,line width=2pt,line cap=round] {
				node[draw,circle,solid,inner sep=1.5pt,radius=1pt,fill,line width=1pt] { }
				child [solid, line width=1pt] {
					node[draw,circle,solid,inner sep=1.5pt,radius=1pt,fill,line width=1pt,label={[label distance=10cm]0:$\mathcal{B}_{l}$}] (C1) { }
					child [solid, line width=1pt] {
						child [dash pattern=on 0pt off 8pt,line width=2pt,line cap=round] {
							node[draw,circle,solid,inner sep=1.5pt,radius=1pt,fill,line width=1pt,label={270:$0$},label={[label distance=11cm]0:$\mathbb{Q}_2$}] (D1) { }
						}
						child [opacity=0] {
							node (D2) { }
						}
					}
					child [solid, line width=1pt] {	}
				}
				child [solid, line width=1pt] {
					node[draw,circle,solid,inner sep=1.5pt,radius=1pt,fill,line width=1pt] { }
					child [solid, line width=1pt] {	}
					child [solid, line width=1pt] {	}
				}
			}
			child [opacity=0] { }
		}
		child[solid, line width=1pt] {
			node[draw,circle,solid,inner sep=1.5pt,radius=1pt,fill,line width=1pt] { }
			child [opacity=0] {	}
			child [dash pattern=on 0pt off 8pt,line width=2pt,line cap=round] {
				node[draw,circle,solid,inner sep=1.5pt,radius=1pt,fill,line width=1pt] { }
				child [solid, line width=1pt] {
					node[draw,circle,solid,inner sep=1.5pt,radius=1pt,fill,line width=1pt] { }
					child [solid, line width=1pt] {	}
					child [solid, line width=1pt] {	}
				}
				child [solid, line width=1pt] {
					node[draw,circle,solid,inner sep=1.5pt,radius=1pt,fill,line width=1pt] (C2) { }
					child [solid, line width=1pt] {	}
					child [solid, line width=1pt] {	}
				}
			}
		}
	}
	child[solid, line width=1pt] {
		node[draw,circle,solid,inner sep=1.5pt,radius=1pt,fill,line width=1pt] (B2) { }
		child[solid, line width=1pt] {
			node[draw,circle,solid,inner sep=1.5pt,radius=1pt,fill,line width=1pt] { }
			child [dash pattern=on 0pt off 8pt,line width=2pt,line cap=round] {
				node[draw,circle,solid,inner sep=1.5pt,radius=1pt,fill,line width=1pt] { }
				child [solid, line width=1pt] {
					node[draw,circle,solid,inner sep=1.5pt,radius=1pt,fill,line width=1pt] { }
					child [solid, line width=1pt] {	}
					child [solid, line width=1pt] {	}
				}
				child [solid, line width=1pt] {
					node[draw,circle,solid,inner sep=1.5pt,radius=1pt,fill,line width=1pt] { }
					child [solid, line width=1pt] {	}
					child [solid, line width=1pt] {	}
				}
			}
			child [opacity=0] {	}
		}
		child[solid, line width=1pt] {
			node[draw,circle,solid,inner sep=1.5pt,radius=1pt,fill,line width=1pt] { }
			child [opacity=0] {	}
			child [dash pattern=on 0pt off 8pt,line width=2pt,line cap=round] {
				node[draw,circle,solid,inner sep=1.5pt,radius=1pt,fill,line width=1pt] { }
				child [solid, line width=1pt] {
					node[draw,circle,solid,inner sep=1.5pt,radius=1pt,fill,line width=1pt] { }
					child [solid, line width=1pt] {	}
					child [solid, line width=1pt] {	}
				}
				child [solid, line width=1pt] {
					node[draw,circle,solid,inner sep=1.5pt,radius=1pt,fill,line width=1pt] { }
					child [solid, line width=1pt] {	}
					child [solid, line width=1pt] {	
						child [opacity=0] {}
						child [dash pattern=on 0pt off 8pt,line width=2pt,line cap=round] {	}
					}
				}
			}
		}
	}
}
child [opacity=0]{
	node[draw,circle,solid,inner sep=1.5pt,radius=1pt,fill,line width=1pt,opacity=0] (A2) { }
};
\draw[shorten >=-5cm,,shorten <=2cm, dash pattern=on 0cm off 1cm,line width=2pt,line cap=round] (A1)--(A2);
\draw[shorten >=3cm,,shorten <=-6cm, dash pattern=on 0cm off 1cm,line width=2pt,line cap=round] (A1)--(A2);
\draw[shorten >=-5cm,,shorten <=6cm, dash pattern=on 0cm off 1cm,line width=2pt,line cap=round] (B1)--(B2);
\draw[shorten >=5cm,,shorten <=-4cm, dash pattern=on 0cm off 1cm,line width=2pt,line cap=round] (B1)--(B2);
\draw[shorten >=-7.5cm,,shorten <=8.5cm, dash pattern=on 0cm off 1cm,line width=2pt,line cap=round] (C1)--(C2);
\draw[shorten >=3.5cm,,shorten <=-2.5cm, dash pattern=on 0cm off 1cm,line width=2pt,line cap=round] (C1)--(C2);

\draw[shorten >=-9.5cm,,shorten <=-1.5cm, dash pattern=on .1cm off .4cm, line width=1.5pt,line cap=round] (D1)--(D2);
\end{tikzpicture}
\caption{Defining the arithmetic mean eigenvalue $\overline{\lambda_{l}}(B_{L},\omega)$}
\end{figure}
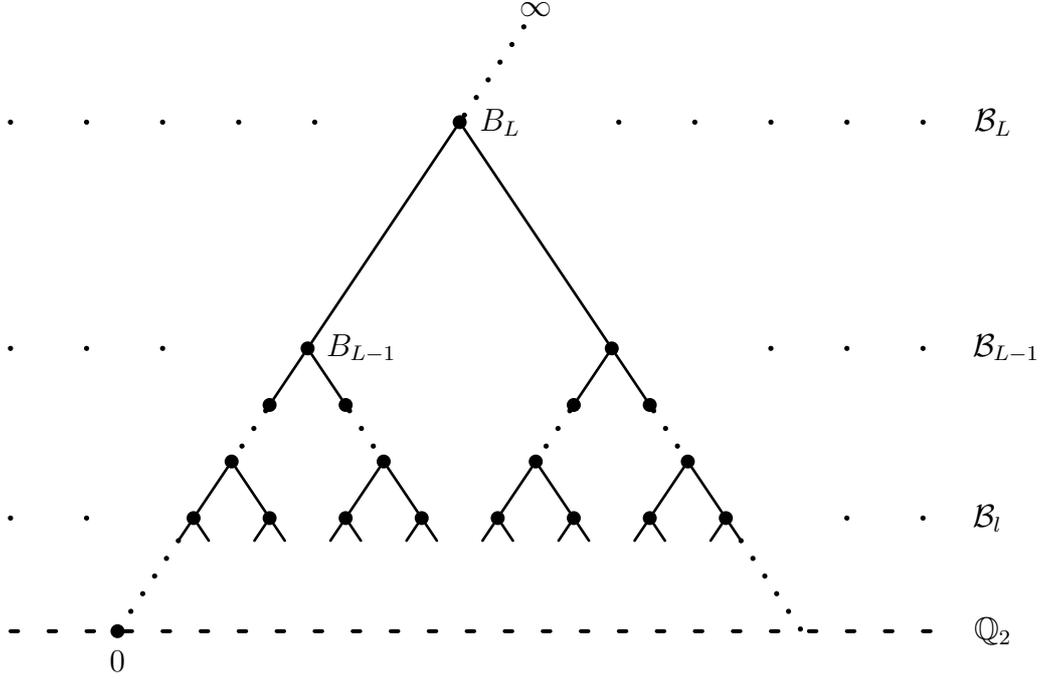

Let, as in the proof of Proposition \ref{Spectrum at random},%
\[
U(B,\omega)=\sum\limits_{k\geq1}2^{-k}\varepsilon(B_{k},\omega)\quad\text{and
}\quad B=B_{1}\varsubsetneq B_{2}\varsubsetneq...\text{ .}%
\]
Choose a reference point $o$ and let $B_{L}$ $\in\mathcal{B}_{L}$ be the ball
centred at $o.$ Fix a level $\mathcal{B}_{l}$ and define the arithmetic mean
$\overline{U}(B_{L},\omega),$ $L>l,$ as%
\[
\overline{U}(B_{L},\omega)=\frac{1}{2^{L-l}}\sum\limits_{B:\text{ }B\subset
B_{L},B\in\mathcal{B}_{l}}U(B,\omega).
\]
Let $\mathsf{E} \left[ \varepsilon(B)\right] =p$ and
$\mathsf{Var}\left[\varepsilon(B)\right]=pq,$
$p+q=1.$ Using the tree-structure of the ultrametric measure space
$(\mathbb{Q}_{2},d,m)$ and the fact that $\{\varepsilon(B):B\in\mathcal{B}\}$
are i.i.d. Bernoulli random variables we compute

\begin{enumerate}
\item  $\mathsf{E}\left[  \overline{U}(B_{L})\right]  =p,$

\item $\mathsf{Var}\left[  \overline{U}(B_{L})\right]  $ $\sim pq/2^{L-l},$ as
$L\rightarrow\infty.$
\end{enumerate}

In particular, $\sum\limits_{L\geq l}\mathsf{Var}\left[  \overline{U}%
(B_{L})\right]  <\infty.$ It follows that, as $L\rightarrow\infty,$%
\begin{equation}
\overline{U}(B_{L},\omega)\longrightarrow p\text{ }\quad P-a.s.\label{LLN}%
\end{equation}
and%
\begin{equation}
\frac{\overline{U}(B_{L},\omega)-p}{\sqrt{2^{-L+l}pq}}\longrightarrow
N(0,1)\text{  }\quad\text{in law},\label{CLT}%
\end{equation}
where $N(0,1)$ is the standard normal random variable.

For a given level $\mathcal{B}_{l}$ we define the arithmetic mean eigenvalue
$\overline{\lambda_{l}}(B_{L},\omega),$ $L>l,$ as
\[
\overline{\lambda_{l}}(B_{L},\omega)=\frac{1}{2^{L-l}}\sum\limits_{B:\text{
}B\subset B_{L},B\in\mathcal{B}_{l}}\lambda(B,\omega).
\]

Applying (\ref{LLN}), (\ref{CLT}) and the equation (\ref{lambda at random}) we
obtain the following result.

\begin{proposition}
\label{Limit theorem}In the notation introduced above, \[
\overline{\lambda_{l}}(B_{L},\omega)\longrightarrow2^{-l}(1+\delta p)\quad
P-a.s.,
\]
and%
\[
\frac{\overline{\lambda_{l}}(B_{L},\omega)-2^{-l}(1+\delta p)}{\delta
\sqrt{2^{-L-l}pq}}\longrightarrow N(0,1)\quad\text{in law},
\]
for any given level $\mathcal{B}_{l}$ $,$ and as $L\rightarrow\infty.$
\end{proposition}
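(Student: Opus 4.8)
The plan is to reduce the statement to the two limit theorems (\ref{LLN}) and (\ref{CLT}) already recorded for $\overline{U}(B_{L},\omega)$, the only new ingredient being the affine relation between the averaged eigenvalue and $\overline{U}$. First I would record this relation. By (\ref{lambda at random}), every ball $B\in\mathcal{B}_{l}$ carries the eigenvalue $\lambda(B,\omega)=2^{-l}(1+\delta U(B,\omega))$, a deterministic constant $2^{-l}$ times an affine function of $U(B,\omega)$. Averaging over the $2^{L-l}$ balls $B\in\mathcal{B}_{l}$ with $B\subset B_{L}$ and pulling the $L$-independent factors through the sum, I obtain
\[
\overline{\lambda_{l}}(B_{L},\omega)=2^{-l}\bigl(1+\delta\,\overline{U}(B_{L},\omega)\bigr),
\]
so that $\overline{\lambda_{l}}(B_{L},\omega)$ is a fixed affine image of $\overline{U}(B_{L},\omega)$, with coefficients not depending on $L$.

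For the almost sure assertion I would simply substitute (\ref{LLN}): since $\overline{U}(B_{L},\omega)\to p$ $P$-a.s.\ and the map $x\mapsto 2^{-l}(1+\delta x)$ is continuous, the identity above yields $\overline{\lambda_{l}}(B_{L},\omega)\to 2^{-l}(1+\delta p)$ $P$-a.s.

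For the distributional assertion I would subtract the limit, giving
\[
\overline{\lambda_{l}}(B_{L},\omega)-2^{-l}(1+\delta p)=2^{-l}\delta\bigl(\overline{U}(B_{L},\omega)-p\bigr),
\]
and then divide by the deterministic normalizer $\delta\sqrt{2^{-L-l}pq}$. Using the elementary identity $2^{-l}/\sqrt{2^{-L-l}}=1/\sqrt{2^{-L+l}}$, the left-hand side of the claimed convergence collapses exactly to
\[
\frac{\overline{\lambda_{l}}(B_{L},\omega)-2^{-l}(1+\delta p)}{\delta\sqrt{2^{-L-l}pq}}=\frac{\overline{U}(B_{L},\omega)-p}{\sqrt{2^{-L+l}pq}},
\]
which tends in law to $N(0,1)$ by (\ref{CLT}); since rescaling by a nonrandom constant preserves convergence in distribution, this finishes the argument.

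The point to stress is that all the substance---the dependence structure of the variables $U(B)$, $B\in\mathcal{B}_{l}$, along the branches of the tree, the second-moment estimates (1)--(2), and the summability $\sum_{L\ge l}\mathsf{Var}[\overline{U}(B_{L})]<\infty$ that drives the almost sure limit (\ref{LLN})---is already carried out before the statement. Consequently the Proposition itself presents no genuine analytic obstacle; the only thing requiring care is the bookkeeping of the normalizing constants, precisely the verification that $2^{-l}\delta\sqrt{2^{-L+l}pq}=\delta\sqrt{2^{-L-l}pq}$, so that the central-limit scaling for $\overline{U}$ matches the scaling displayed in the statement.
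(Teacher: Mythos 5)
Your proposal is correct and coincides with the paper's own argument: the paper proves the Proposition precisely by ``applying (\ref{LLN}), (\ref{CLT}) and the equation (\ref{lambda at random})'', that is, via the same affine identity $\overline{\lambda_{l}}(B_{L},\omega)=2^{-l}\bigl(1+\delta\,\overline{U}(B_{L},\omega)\bigr)$ followed by the continuous-mapping step for the almost sure limit and the rescaling check $2^{-l}\sqrt{2^{-L+l}pq}=\sqrt{2^{-L-l}pq}$ for the normal limit. Your bookkeeping of the normalizing constant is verified and matches the denominator displayed in the statement, so nothing is missing.
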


%\bibliography{ref}

\end{document}